\newcommand{\reals}{\mathbb{R}}
\newcommand{\minimize}{\mathop{\mathrm{minimize}{}}}
\newcommand{\argmin}{\mathop{\mathrm{arg\,min}{}}}
\newcommand{\dom}{\mathrm{dom}}
\newcommand{\prox}{\mathrm{prox}}
\newcommand{\Prob}{\mathrm{Prob}}
\newcommand{\E}{\mathbb{E}}
\newcommand{\eqdef}{\stackrel{\mathrm{def}}{=}}
\newcommand{\Lmax}{L_\mathrm{max}}
\newcommand{\Lavg}{L_\mathrm{avg}}
\newtheorem{assumption}{Assumption}
\newtheorem{lemma}{Lemma}
\newtheorem{theorem}{Theorem}
\newtheorem{corollary}{Corollary}
\title{A Proximal Stochastic Gradient Method with Progressive Variance Reduction}
\author{Lin Xiao\thanks{
    Machine Learning Group, Microsoft Research, Redmond, WA 98052.
    Email: \texttt{lin.xiao@microsoft.com}. }
\and Tong Zhang\thanks{    
    Department of Statistics, Rutgers University, Piscataway, NJ 08854;
    and Baidu Inc., Beijing 100085.
    Email: \texttt{tzhang@stat.rutgers.edu}. } }
\date{March 18, 2014}
\begin{document}
\maketitle

\vspace{-4ex}
\begin{abstract}
We consider the problem of minimizing the sum of two convex functions: 
one is the average of a large number of smooth component functions, and 
the other is a general convex function that admits a simple proximal mapping. 
We assume the whole objective function is strongly convex.
Such problems often arise in machine learning, known as regularized empirical 
risk minimization. We propose and analyze a new proximal stochastic gradient 
method, which uses a multi-stage scheme to progressively reduce the variance 
of the stochastic gradient. 
While each iteration of this algorithm has similar cost as the classical 
stochastic gradient method (or incremental gradient method), 
we show that the expected objective value converges to the optimum
at a geometric rate. 
The overall complexity of this method is much lower than both the 
proximal full gradient method and the standard 
proximal stochastic gradient method.
%
%
\end{abstract}

\section{Introduction}
We consider the problem of minimizing the sum of two convex functions:
\begin{equation}\label{eqn:min-composite}
    \minimize_{x\in\reals^d} \quad \{P(x) \eqdef F(x) + R(x)\},
\end{equation}
where $F(x)$ is the average of many smooth component functions~$f_i(x)$, i.e., 
\begin{equation}\label{eqn:avg-loss}
    F(x) = \frac{1}{n} \sum_{i=1}^n f_i(x) ,
\end{equation}
and $R(x)$ is relative simple but can be non-differentiable.
We are especially interested in the case where the number of components~$n$
is very large, and it can be advantageous to use incremental methods
(such as stochastic gradient method)
that operate on a single component~$f_i$ at each iteration, rather than
on the entire cost function.

Problems of this form often arise in machine learning and statistics, 
known as \emph{regularized empirical risk minimization};
see, e.g., \cite{HTFbook}.
In such problems, we are given a collection of training examples 
$(a_1, b_1), \ldots, (a_n, b_n)$, where each $a_i\in\reals^d$ is a 
feature vector and $b_i\in\reals$ is the desired response. 
For least-squares regression, the component loss functions are
$f_i(x)=(1/2)(a_i^T x-b_i)^2$, 
and popular choices of the regularization term include 
$R(x)=\lambda_1\|x\|_1$ (the Lasso), 
$R(x)=(\lambda_2/2)\|x\|_2^2$ (ridge regression), or
$R(x)=\lambda_1\|x\|_1 + (\lambda_2/2)\|x\|_2^2$ (elastic net),
where $\lambda_1$ and $\lambda_2$ are nonnegative regularization parameters.
For binary classification problems, each $b_i\in\{+1,-1\}$ is the desired 
class label, and a popular loss function is the logistic loss
$f_i(x)=\log(1+\exp(-b_i a_i^T x))$,
which can be combined with any of the regularization terms mentioned above.

The function $R(x)$ can also be used to model convex constraints. 
Given a closed convex set~$C\subseteq \reals^d$, the constrained problem
\[
    \minimize_{x\in C} \quad \frac{1}{n}\sum_{i=1}^n f_i(x)
\]
can be formulated as~\eqref{eqn:min-composite} by setting $R(x)$ to be
the indicator function of~$C$, i.e., $R(x)=0$ if $x\in C$ and 
$R(x)=\infty$ otherwise.
Mixtures of the ``soft'' regularizations (such as $\ell_1$ or $\ell_2$
penalties) and ``hard'' constraints are also possible.

The results presented in this paper are based on the following 
assumptions.
\begin{assumption}\label{asp:smooth-components}
The function $R(x)$ is lower semi-continuous and convex,
and its effective domain, $\dom(R):=\{x\in\reals^d\,|\,R(x)<+\infty\}$,
is closed.
Each $f_i(x)$, for $i=1,\ldots,n$, is differentiable
on an open set that contains~$\dom(R)$, and their gradients are
Lipschitz continuous.
That is, there exist~$L_i>0$ such that for all $x, y\in\dom(R)$,
\begin{equation}\label{eqn:smooth-i}
    \|\nabla f_i(x) - \nabla f_i(y)\| \leq L_i \|x-y\|. 
\end{equation}
\end{assumption}
Assumption~\ref{asp:smooth-components} implies that the gradient 
of the average function $F(x)$ is also Lipschitz continuous, i.e., 
there is an $L>0$ such that for all $x, y\in\dom(R)$,
\[
    \|\nabla F(x) - \nabla F(y)\| \leq L \|x-y\|.
\]
Moreover, we have $L\leq (1/n)\sum_{i=1}^n L_i$.
\begin{assumption}\label{asp:strong-convex}
The overall cost function $P(x)$ is strongly convex, i.e., 
there exist $\mu>0$ such that for all $x\in\dom(R)$ and $y\in\reals^d$,
\begin{equation}\label{eqn:strong-convex}
    P(y) \geq P(x) + \xi^T(y-x) + \frac{\mu}{2} \|y-x\|^2,
    \quad \forall\, \xi\in\partial P(x).
\end{equation}
\end{assumption}
The \emph{convexity parameter} of a function is the largest~$\mu$
such that the above condition holds.
The strong convexity of $P(x)$ may come from either $F(x)$ or $R(x)$ or both.
More precisely, let $F(x)$ and  $R(x)$ have convexity parameters $\mu_F$ 
and $\mu_R$ respectively, then $\mu \geq \mu_F + \mu_R$.
We note that it is possible to have $\mu>L$ 
although we must have $\mu_F\leq L$.

\subsection{Proximal gradient and stochastic gradient methods}

A standard method for solving problem~\eqref{eqn:min-composite} is 
the \emph{proximal gradient method}.
Given an initial point $x_0\in\reals^d$, the proximal gradient method uses 
the following update rule for $k=1,2,\ldots$
\[
    x_k = \argmin_{x\in\reals^d} \left\{ \nabla F(x_{k-1})^T x
    + \frac{1}{2\eta_k}\|x-x_{k-1}\|^2 + R(x) \right\},
\]
where $\eta_k$ is the step size at the $k$-th iteration.
Throughout this paper, we use $\|\cdot\|$ to denote the usual Euclidean norm, 
i.e., $\|\cdot\|_2$, unless otherwise specified.
With the definition of \emph{proximal mapping}
\[
\prox_R(y) = \argmin_{x\in\reals^d}\left\{ \frac{1}{2}\|x-y\|^2 + R(x)\right\}, 
\]
the proximal gradient method can be written more compactly as
\begin{equation}\label{eqn:prox-fg}
x_k = \prox_{\eta_k R}\bigl(x_{k-1} - \eta_k \nabla F(x_{k-1}) \bigr) .
\end{equation}
This method can be viewed as a special case of \emph{splitting} algorithms 
\cite{LiM:79,ChR:97,Tse:00}, and its accelerated variants have been proposed
and analyzed in \cite{BeckTeboulle09,Nesterov13composite}.

When the number of components~$n$ is very large, each iteration
of~\eqref{eqn:prox-fg} can be very expensive
since it requires computing the gradients for all the~$n$ 
component functions~$f_i$, and also their average.
For this reason, we refer to~\eqref{eqn:prox-fg}
as the proximal \emph{full} gradient (Prox-FG) method.
An effective alternative is the \emph{proximal stochastic gradient} 
(Prox-SG) method:
at each iteration $k=1,2,\ldots$, we draw~$i_k$ randomly from $\{1,\ldots,n\}$
and take the update
\begin{equation}\label{eqn:prox-sg}
x_k = \prox_{\eta_k R}\bigl(x_{k-1} - \eta_k \nabla f_{i_k}(x_{k-1}) \bigr) .
\end{equation}
Clearly we have $\E \nabla f_{i_k}(x_{k-1})=\nabla F(x_{k-1})$.
The advantage of the Prox-SG method is that at each iteration, 
it only evaluates gradient of a single component function,
thus the computational cost per iteration is only $1/n$ that of 
the Prox-FG method.
However, due to the variance introduced by random sampling,
the Prox-SG method converges much more slowly than the Prox-FG method.
To have a fair comparison of their overall computational cost, 
we need to combine their cost per iteration and iteration complexity.

Let $x^\star=\argmin_x P(x)$.
Under the Assumptions~\ref{asp:smooth-components} and~\ref{asp:strong-convex},
the Prox-FG method with a constant step size $\eta_k=1/L$ generates
iterates that satisfy
\begin{equation}\label{eqn:prox-fg-rate}
P(x_k)-P(x_\star) \leq O\biggl(\left(\frac{L-\mu_F}{L+\mu_R}\right)^k \biggr).
\end{equation}
(See Appendix~\ref{apd:prox-fg} for a proof of this result.)
The most interesting case for large-scale applications is when $\mu\ll L$,
and the ratio $L/\mu$ is often called the \emph{condition number} of the 
problem~\eqref{eqn:min-composite}.
In this case, the Prox-FG method needs 
$O\left((L/\mu)\log(1/\epsilon)\right)$ iterations to ensure
$P(x_k)-P(x_\star)\leq \epsilon$.
Thus the overall complexity of Prox-FG, 
in terms of the total number of component gradients evaluated to find an
$\epsilon$-accurate solution, is $O\left(n(L/\mu)\log(1/\epsilon)\right)$.
The accelerated Prox-FG methods in \cite{BeckTeboulle09, Nesterov13composite}
reduce the complexity to $O\bigl(n\sqrt{L/\mu}\log(1/\epsilon)\bigr)$.

On the other hand, with a diminishing step size $\eta_k = 1/(\mu k)$,
the Prox-SG method converges at a sublinear rate 
(\cite{DuS:09,LLZ:09}):
\begin{equation}\label{eqn:prox-sg-rate}
    \E P(x_{k}) - P(x_\star) \leq O\left(1/\mu k\right).
\end{equation}
Consequently, the total number of component gradient evaluations required by
the Prox-SG method to find an $\epsilon$-accurate solution (in expectation)
is $O(1/\mu\epsilon)$. 
This complexity scales poorly in~$\epsilon$ compared with 
$\log(1/\epsilon)$, but it is independent of~$n$. 
Therefore, when~$n$ is very large, the Prox-SG method can be more efficient,
especially to obtain low-precision solutions.

There is also a vast literature on \emph{incremental gradient methods} for 
minimizing the sum of a large number of component functions.
The Prox-SG method can be viewed as a variant of the randomized
incremental proximal algorithms proposed in \cite{Bertsekas11}.
Asymptotic convergence of such methods typically requires diminishing
step sizes and only have sublinear convergence rates. 
A comprehensive survey on this topic can be found in \cite{Bertsekas10}.

\subsection{Recent progresses and our contributions}

Both the Prox-FG and Prox-SG methods do not fully exploit the problem structure
defined by~\eqref{eqn:min-composite} and~\eqref{eqn:avg-loss}.
In particular, Prox-FG ignores the fact that the smooth part $F(x)$
is the average of~$n$ component functions. On the other hand,
Prox-SG can be applied for more general stochastic optimization problems, 
and it does not exploit the fact that the objective function 
in~\eqref{eqn:min-composite} is actually a deterministic function.
Such inefficiencies in exploiting problem structure 
leave much room for further improvements.

Several recent work considered various special cases 
of~\eqref{eqn:min-composite} and~\eqref{eqn:avg-loss},
and developed algorithms that enjoy the complexity 
(total number of component gradient evaluations)
\begin{equation}\label{eqn:complexity-max}
O\bigl( (n+\Lmax/\mu)\log(1/\epsilon) \bigr),
\end{equation}
where $\Lmax=\max\{L_1,\ldots, L_n\}$.
If $\Lmax$ is not significantly larger than~$L$, this complexity 
is far superior than that of both the Prox-FG and Prox-SG methods.
In particular, Shalev-Shwartz and Zhang 
\cite{SSZhang13SDCA,SSZhang12prox-SDCA}
considered the case where the component functions have the form 
$f_i(x)=\phi_i(a_i^T x)$ and the Fenchel conjugate functions
of~$\phi_i$ and~$R$ can be computed efficiently.
With the additional assumption that $R(x)$ itself is $\mu$-strongly convex,
they showed that a proximal stochastic dual coordinate ascent (Prox-SDCA) 
method achieves the complexity in~\eqref{eqn:complexity-max}.

Le Roux et al.\ \cite{LeRouxSchmidtBach12} considered the case where 
$R(x)\equiv 0$,
and proposed a \emph{stochastic average gradient} (SAG) method which 
has complexity $O\bigl(\max\{n,\Lmax/\mu\}\log(1/\epsilon)\bigr)$.
Apparently this is on the same order as~\eqref{eqn:complexity-max}.
The SAG method is a randomized variant of the 
\emph{incremental aggregated gradient} method of 
Blatt et al.\ \cite{BlattHeroGauchman07}, and needs to store
the most recent gradient for each component function~$f_i$, which is $O(nd)$.
While this storage requirement can be prohibitive for large-scale problems,
it can be reduced to $O(n)$ for problems with more favorable structure, 
such as linear prediction problems in machine learning.

More recently, Johnson and Zhang \cite{JohnsonZhang13} developed 
another algorithm for the case $R(x)\equiv 0$, called
\emph{stochastic variance-reduced gradient} (SVRG).
The SVRG method employs a multi-stage scheme to progressively reduce the 
variance of the stochastic gradient, and achieves the same low complexity
in~\eqref{eqn:complexity-max}.
Moreover, it avoids storage of past gradients for the component functions,
and its convergence analysis is considerably simpler than that of SAG.
A very similar algorithm was proposed 
by Zhang et al.\ \cite{ZhangMahdaviJin13}, but with a
worse convergence rate analysis.
Another recent effort to extend the SVRG method is
\cite{KonecnyRichtarik13}.

In this paper, we extend the variance reduction technique of SVRG to
develop a proximal SVRG (Prox-SVRG) method for solving the more general 
class of problems defined in~\eqref{eqn:min-composite} and~\eqref{eqn:avg-loss}.
We show that with uniform sampling of the component functions, 
the Prox-SVRG method achieves the same complexity in~\eqref{eqn:complexity-max}.
Moreover, our method incorporates a weighted sampling strategy. 
When the sampling probabilities for~$f_i$ are proportional to their Lipschitz
constants~$L_i$, the Prox-SVRG method has complexity
\begin{equation}\label{eqn:complexity-avg}
O\bigl( (n+\Lavg/\mu)\log(1/\epsilon) \bigr),
\end{equation}
where $\Lavg=(1/n)\sum_{i=1}^n L_i$.
This bound improves upon the one in~\eqref{eqn:complexity-max},
especially for applications where the component functions vary 
substantially in smoothness.

\section{The Prox-SVRG method}

Recall that in the Prox-SG method~\eqref{eqn:prox-sg},
with uniform sampling of~$i_k$, we have unbiased estimate
of the full gradient at each iteration.
In order to ensure asymptotic convergence,
the step size $\eta_k$ has to decay to zero to mitigate the effect of variance
introduced by random  sampling, which leads to slow convergence.
However, if we can gradually reduce the variance in estimating the full
gradient, then it is possible to use much larger (even constant) 
step sizes and obtain much faster convergence rate. 
Several recent work 
(e.g., \cite{FriedlanderSchmidt12, ByrdChinNocedalWu12,FriedlanderGoh:2013})
have explored this idea by using mini-batches with exponentially growing sizes,
but their overall computational cost 
is still on the same order as full gradient methods.

Instead of increasing the batch size gradually, 
we use the variance reduction technique of SVRG \cite{JohnsonZhang13}, 
which computes the full batch periodically.
More specifically, we maintain an estimate~$\tilde x$ of the optimal 
point~$x_\star$, which is updated periodically, 
say after every~$m$ Prox-SG iterations.
Whenever~$\tilde x$ is updated, we also computes the full gradient
\[
\nabla F(\tilde x) = \frac{1}{n}\sum_{i=1}^n \nabla f_i(\tilde x),
\]
and use it to modify the next~$m$ stochastic gradient directions.
Suppose the next~$m$ iterations are initialized with $x_0=\tilde{x}$
and indexed by $k=1,\ldots,m$.
For each $k\geq 1$, we first randomly pick $i_k\in\{1,\ldots,n\}$ and compute
\[
v_k = \nabla f_{i_k}(x_{k-1})-\nabla f_{i_k}(\tilde{x}) + \nabla F(\tilde{x}),
\]
then we replace $\nabla f_{i_k}(x_{k-1})$ in the Prox-SG 
method~\eqref{eqn:prox-sg} with~$v_k$, i.e., 
\begin{equation}\label{eqn:prox-svrg-update}
x_k = \prox_{\eta_k R}\bigl(x_{k-1} - \eta_k v_k \bigr) .
\end{equation}

Conditioned on $x_{k-1}$, we can take expectation with respect to~$i_k$ and 
obtain
\begin{eqnarray*}
\E v_k &=& \E \nabla f_{i_k}(x_{k-1}) - \E \nabla f_{i_k}(\tilde{x}) 
           + \nabla F(\tilde{x}) \\
       &=& \nabla F(x_{k-1}) - \nabla F(\tilde{x}) + \nabla F(\tilde{x})\\
       &=& \nabla F(x_{k-1}).
\end{eqnarray*}
Hence, just like $\nabla f_{i_k}(x_{k-1})$, the modified direction
$v_k$ is also a stochastic gradient of~$F$ at~$x_{k-1}$.
However, the variance $\E\|v_k-\nabla F(x_{k-1})\|^2$ can be much smaller
than $\E\|\nabla f_{i_k}(x_{k-1})-\nabla F(x_{k-1})\|^2$.
In fact we will show in Section~\ref{sec:var-bd} 
that the following inequality holds:
\begin{equation}\label{eqn:var-bd-max}
\E \|v_k-\nabla F(x_{k-1})\|^2 \leq 4 \Lmax \bigl[ P(x_{k-1})-P(x_\star)
 + P(\tilde{x}) - P(x_\star) \bigr].
\end{equation}
Therefore, when both $x_{k-1}$ and $\tilde x$ converge to $x_\star$, 
the variance of $v_k$ also converges to zero.
As a result, we can use a constant step size and obtain much faster convergence.

\begin{figure}[t]
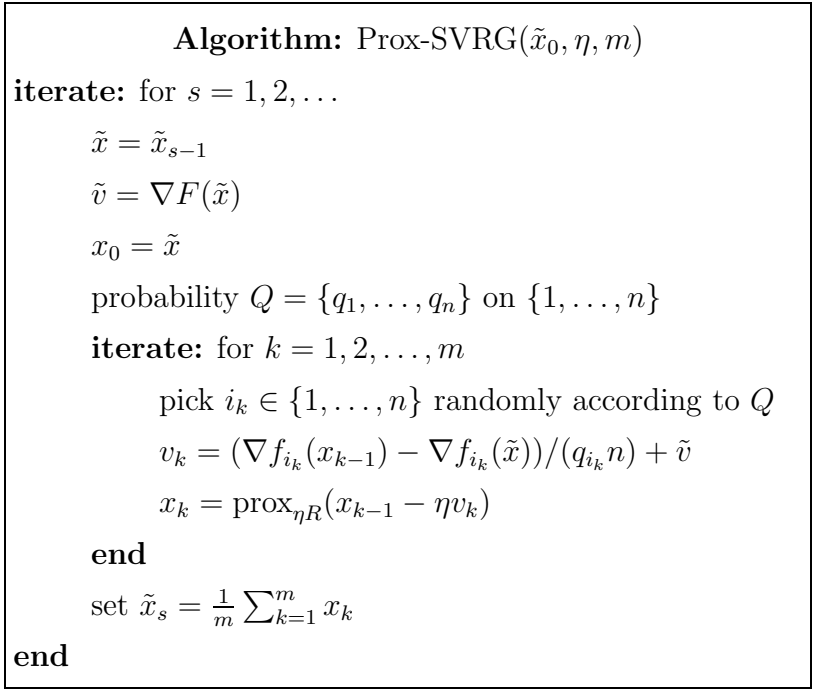

\begin{center}
\begin{boxedminipage}{0.65\textwidth}
\vspace{1ex}
\centerline{\textbf{Algorithm:} Prox-SVRG$(\tilde{x}_0, \eta, m)$}
\vspace{1ex}
\textbf{iterate:} for $s=1,2,\ldots$
\vspace{-1ex}
\begin{itemize} \itemsep 0pt
  \item[] $\tilde{x}=\tilde{x}_{s-1}$
  \item[] $\tilde{v} = \nabla F(\tilde{x})$
  \item[] $x_0=\tilde{x}$
  \item [] probability $Q=\{q_1,\ldots,q_n\}$ on $\{1,\ldots,n\}$
  \item[] \textbf{iterate:} for $k=1,2,\ldots,m$
    \vspace{-1ex}
    \begin{itemize}
      \item[] pick $i_k\in\{1,\ldots,n\}$ randomly according to $Q$
      \item[] $v_k = (\nabla f_{i_k}(x_{k-1}) - \nabla f_{i_k}(\tilde{x}))/(q_{i_k} n)  + \tilde{v}$
      \item[] $x_k = \prox_{\eta R} (x_{k-1} - \eta v_k)$
    \end{itemize}
\vspace{-1ex}
\item[] \textbf{end}
\item[] set $\tilde{x}_s = \frac{1}{m} \sum_{k=1}^m x_k$
\end{itemize}
\vspace{-2ex}
\item[] \textbf{end}
\vspace{1ex}
\end{boxedminipage}
\end{center}
\label{fig:prox-svrg}
\vspace{-2ex}
\caption{The Prox-SVRG method.}
\end{figure}

Figure~\ref{fig:prox-svrg} gives the full description of the 
Prox-SVRG method with a constant step size~$\eta$.
It allows random sampling from a general distribution $\{q_1,\ldots,q_n\}$,
thus is more flexible than the uniform sampling scheme described above.
It is not hard to verify that the modified stochastic gradient,
\begin{equation}\label{eqn:modified-sg}
v_k = (\nabla f_{i_k}(x_{k-1}) - \nabla f_{i_k}(\tilde{x}))/(q_{i_k}n) 
+ \nabla F(\tilde{x}) ,
\end{equation}
still satisfies $\E v_k=\nabla F(x_{k-1})$.
In addition, its variance can be bounded similarly as in~\eqref{eqn:var-bd-max}
(see Corollary~\ref{cor:var-bd} in Section~\ref{sec:var-bd}).

The Prox-SVRG method uses a multi-stage scheme to progressively reduce
the variance of the modified stochastic gradient~$v_k$ 
as both $\tilde{x}$ and $x_{k-1}$ converges to $x_\star$.
Each stage~$s$ requires $n+2m$ component gradient evaluations:
$n$ for the full gradient at the beginning of each stage, and two for 
each of the~$m$ proximal stochastic gradient steps.
For some problems such as linear prediction in machine learning, 
the cost per stage can be further reduced to only
$n+m$ gradient evaluations.
In practical implementations, we can also set $\tilde{x}_s$ to be the last 
iterate~$x_m$, instead of $(1/m)\sum_{k=1}^m x_k$, of the previous stage.
This simplifies the computation and we did not observe much difference in
the convergence speed.

\section{Convergence analysis}
\label{sec:analysis}

\begin{theorem}\label{thm:prox-svrg}
Suppose Assumptions~\ref{asp:smooth-components} and~\ref{asp:strong-convex} 
hold, and let $x_\star=\argmin_x P(x)$ and $L_Q= \max_i L_i/(q_i n)$.
In addition, assume that $0<\eta<1/(4L_Q)$ 
and~$m$ is sufficiently large so that
\begin{equation}\label{eqn:converge-factor}
    \rho=\frac{1}{\mu\eta(1-4L_Q\eta)m}+\frac{4L_Q\eta(m+1)}{(1-4L_Q\eta)m}<1.
\end{equation}
Then the Prox-SVRG method in Figure~\ref{fig:prox-svrg}
has geometric convergence in expectation:
\[
    \E P(\tilde{x}_s) - P(x_\star) \leq \rho^s [P(\tilde{x}_0) - P(x_\star)].
\]
\end{theorem}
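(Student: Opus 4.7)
My plan is to mimic the SVRG analysis of Johnson--Zhang, but with the full gradient replaced by the composite proximal gradient mapping so that the nonsmooth term $R$ is handled cleanly. The target is a one-stage contraction of the form
\[
\E\bigl[P(\tilde x_s)-P(x_\star)\bigr] \leq \rho\bigl[P(\tilde x_{s-1})-P(x_\star)\bigr],
\]
from which the theorem follows by induction on $s$.

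First, I would establish the variance bound promised in the excerpt, namely
\[
\E\|v_k-\nabla F(x_{k-1})\|^2 \leq 4L_Q\bigl[P(x_{k-1})-P(x_\star)+P(\tilde x)-P(x_\star)\bigr],
\]
valid for the reweighted estimator~\eqref{eqn:modified-sg}. The proof uses $\E\|X-\E X\|^2\leq \E\|X\|^2$ applied to $X=(\nabla f_{i_k}(x_{k-1})-\nabla f_{i_k}(\tilde x))/(q_{i_k}n)$, then the standard co-coercivity inequality $\|\nabla f_i(x)-\nabla f_i(x_\star)\|^2\leq 2L_i[f_i(x)-f_i(x_\star)-\nabla f_i(x_\star)^T(x-x_\star)]$, summed with weights $q_i^{-1}n^{-1}$ to turn each $L_i/(q_i n)$ into at most $L_Q$, and finally using optimality of $x_\star$ for $P$ together with convexity of $R$ to collapse the gradient-of-$F$ cross terms.

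Next comes the per-iteration analysis inside a fixed stage (so $\tilde x$ is fixed and $x_0=\tilde x$). The key ingredient is a three-point inequality for the prox step $x_k=\prox_{\eta R}(x_{k-1}-\eta v_k)$: writing $g_k=(x_{k-1}-x_k)/\eta$, the optimality condition gives $g_k-v_k\in\partial R(x_k)$, and combining this with smoothness of $F$ and convexity of $R$ and $F$ leads to a deterministic bound of the form
\[
P(x_k)-P(x_\star) \leq \tfrac{1}{2\eta}\bigl(\|x_{k-1}-x_\star\|^2-\|x_k-x_\star\|^2\bigr)-\tfrac{\mu}{2}\|x_{k-1}-x_\star\|^2+(v_k-\nabla F(x_{k-1}))^T(x_\star-x_k)+\text{error}.
\]
Taking conditional expectation w.r.t.\ $i_k$ kills the linear term in $v_k-\nabla F(x_{k-1})$ (since $\E v_k=\nabla F(x_{k-1})$ and $x_{k-1}$ is measurable at this point), and bounding the remaining quadratic term by the variance inequality above gives
\[
\E\|x_k-x_\star\|^2 \leq \|x_{k-1}-x_\star\|^2-2\eta(1-4L_Q\eta)\E\bigl[P(x_k)-P(x_\star)\bigr]+8L_Q\eta^2\bigl[P(\tilde x)-P(x_\star)\bigr].
\]
I expect this step to be the main obstacle: getting the constants to line up precisely with the $4L_Q\eta$ in~\eqref{eqn:converge-factor} requires playing the smoothness of $F$ against the $\eta/2\|g_k\|^2$ term the prox three-point inequality naturally provides, and being careful that the analysis uses $P(x_k)-P(x_\star)$ on the left rather than $P(x_{k-1})-P(x_\star)$, which is what avoids losing a factor in $m$.

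Finally, I would telescope the one-step bound over $k=1,\dots,m$ within a stage. The distance terms collapse, leaving
\[
2\eta(1-4L_Q\eta)\sum_{k=1}^{m}\E\bigl[P(x_k)-P(x_\star)\bigr] \leq \|\tilde x-x_\star\|^2+8L_Q\eta^2 m\bigl[P(\tilde x)-P(x_\star)\bigr].
\]
Using strong convexity of $P$ to bound $\|\tilde x-x_\star\|^2\leq (2/\mu)[P(\tilde x)-P(x_\star)]$, dividing by $2\eta(1-4L_Q\eta)m$, and invoking Jensen's inequality with $\tilde x_s=\frac{1}{m}\sum_k x_k$ yields exactly
\[
\E\bigl[P(\tilde x_s)-P(x_\star)\bigr] \leq \left(\frac{1}{\mu\eta(1-4L_Q\eta)m}+\frac{4L_Q\eta}{1-4L_Q\eta}\right)\bigl[P(\tilde x_{s-1})-P(x_\star)\bigr],
\]
which matches $\rho$ up to an $m\mapsto m+1$ adjustment in the second term that comes from also accounting for the $k=0$ iterate absorbed on the right-hand side when one works with $\sum_{k=1}^{m}$ versus $\sum_{k=0}^{m-1}$. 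Iterating this contraction over stages $s$ and taking total expectation delivers the claimed bound.
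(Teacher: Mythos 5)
Your overall architecture matches the paper's proof (variance bound via co-coercivity plus optimality of $x_\star$; a prox three-point inequality; telescoping over the stage; Jensen plus strong convexity to close the recursion), but there is a genuine gap at the single most delicate step. You claim that taking conditional expectation with respect to $i_k$ ``kills the linear term'' $(v_k-\nabla F(x_{k-1}))^T(x_\star-x_k)$ because $\E v_k=\nabla F(x_{k-1})$ and $x_{k-1}$ is measurable. The problem is that the inner product is against $x_\star - x_k$, not $x_\star - x_{k-1}$, and $x_k=\prox_{\eta R}(x_{k-1}-\eta v_k)$ depends on $i_k$. So $\Delta_k=v_k-\nabla F(x_{k-1})$ and $x_k$ are correlated, and $\E[\Delta_k^T(x_\star-x_k)]\neq (\E\Delta_k)^T(x_\star-x_k)$. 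This is exactly the point the paper handles by introducing the auxiliary deterministic point $\bar{x}_k=\prox_{\eta R}(x_{k-1}-\eta\nabla F(x_{k-1}))$, splitting
\[
-2\eta\Delta_k^T(x_k-x_\star)=-2\eta\Delta_k^T(x_k-\bar{x}_k)-2\eta\Delta_k^T(\bar{x}_k-x_\star),
\]
bounding the first piece by $2\eta^2\|\Delta_k\|^2$ via Cauchy--Schwarz and the non-expansiveness of the proximal mapping (Lemma~\ref{lem:non-expansive}), and observing that only the second piece has zero conditional expectation. Without some such device (an equivalent alternative is to split against $x_{k-1}$ and absorb $\eta\Delta_k^Tg_k$ into the $\tfrac{\eta}{2}\|g_k\|^2$ term from the three-point inequality), your per-iteration bound does not follow, and in particular the factor $4L_Q\eta$ in $\rho$ has no source.

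A related bookkeeping issue: your displayed one-step inequality carries only $8L_Q\eta^2[P(\tilde x)-P(x_\star)]$ on the right, whereas the variance bound (Corollary~\ref{cor:var-bd}) contributes $8L_Q\eta^2[P(x_{k-1})-P(x_\star)+P(\tilde x)-P(x_\star)]$. The $P(x_{k-1})-P(x_\star)$ part is not optional; after summing over $k$ it is what turns the coefficient of $\sum_k\E[P(x_k)-P(x_\star)]$ into $2\eta(1-4L_Q\eta)$ and produces the $(m+1)$ rather than $m$ in the second term of $\rho$. You gesture at an ``$m\mapsto m+1$ adjustment,'' but as written the terms you would need to pair up ($-2\eta\,\E P(x_k)$ from step $k$ with $+8L_Q\eta^2 P(x_{k-1})$ from step $k+1$) are absent from your recursion. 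Once the $\bar{x}_k$ device and the full variance term are in place, the rest of your outline (telescoping, $\|\tilde x-x_\star\|^2\le \tfrac{2}{\mu}[P(\tilde x)-P(x_\star)]$, Jensen, division by $2\eta(1-4L_Q\eta)m$) coincides with the paper's argument and yields the stated $\rho$.
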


We have the following remarks regarding the above result:
\begin{itemize}
\item 
The ratio $L_Q/\mu$ can be viewed as a ``weighted'' condition number of $P(x)$.
Theorem~\ref{thm:prox-svrg} implies that setting~$m$ to be on the same
order as $L_Q/\mu$ is sufficient to have geometric convergence.
To see this, let $\eta=\theta/L_Q$ with $0<\theta<1/4$.
When $m\gg 1$, we have
\[
  \rho\approx \frac{L_Q/\mu}{\theta(1-4\theta)m} + \frac{4\theta}{1-4\theta}.
\]
As a result, choosing $\theta=0.1$ and $m=100(L_Q/\mu)$ results in 
$\rho\approx 5/6$.

\item
In order to satisfy $\E P(\tilde{x}_s) - P(x_\star) \leq \epsilon$,
the number of stages~$s$ needs to satisfy
\[
    s\geq \log\rho^{-1} \log \frac{P(\tilde{x}_0)-P(x_\star)}{\epsilon}.
\]
Since each stage requires $n+2m$ component gradient evaluations, 
and it is sufficient to set~$m=\Theta(L_Q/\mu)$,
the overall complexity is
\[
O\bigl( (n+L_Q/\mu)\log(1/\epsilon) \bigr).
\]

\item 
For uniform sampling, $q_i=1/n$ for all $i=1,\ldots,n$, so we have
$L_Q=\max_i L_i$ and the above complexity bound
becomes~\eqref{eqn:complexity-max}.

The smallest possible value for $L_Q$ is $L_Q= (1/n)\sum_{i=1}^n L_i$, 
achieved at $q_i = L_i/\sum_{j=1}^n L_j$, i.e., when the sampling probabilities
for the component functions are proportional to their Lipschitz constants.
In this case, the above complexity bound becomes~\eqref{eqn:complexity-avg}.
\end{itemize}

Since $P(\tilde{x}_s) - P(x_\star) \geq 0$, Markov's
inequality and Theorem~\ref{thm:prox-svrg} imply 
that for any $\epsilon>0$,
\[
\Prob\Bigl( P(\tilde{x}_s) - P(x_\star) \geq \epsilon\Bigr)
~\leq~ \frac{\E [P(\tilde{x}_s) - P(x_\star)]}{\epsilon} 
~\leq~ \frac{\rho^s[P(\tilde{x}_0) - P(x_\star)] }{\epsilon}.
\]
Thus we have the following high-probability bound. 
\begin{corollary}\label{cor:high-prob}
Suppose the assumptions in Theorem~\ref{thm:prox-svrg} hold. 
Then for any $\epsilon>0$ and $\delta\in(0,1)$, we have
\[
  \Prob\bigl(P(\tilde{x}_s) - P(x_\star) \leq \epsilon \bigr) \geq 1-\delta
\]
provided that the number of stages~$s$ satisfies
\[
s \geq \log\left(\frac{[P(\tilde{x}_0) - P(x_\star)]}{\delta\epsilon}\right)
\bigg/\log\left(\frac{1}{\rho}\right).
\]
\end{corollary}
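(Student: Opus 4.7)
The statement is a standard high-probability conversion of the in-expectation convergence bound from Theorem~\ref{thm:prox-svrg}, so the plan is essentially to execute the Markov inequality computation that the paragraph preceding the corollary already sketches, and then solve for the number of stages needed.

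First I would observe that $P(\tilde{x}_s) - P(x_\star)$ is a nonnegative random variable, since $x_\star$ minimizes $P$. Hence Markov's inequality applies directly and yields
\[
\Prob\bigl(P(\tilde{x}_s) - P(x_\star) \geq \epsilon \bigr)
\leq \frac{\E[P(\tilde{x}_s) - P(x_\star)]}{\epsilon}.
\]
Next I would substitute the geometric bound from Theorem~\ref{thm:prox-svrg}, $\E[P(\tilde{x}_s) - P(x_\star)] \leq \rho^s [P(\tilde{x}_0) - P(x_\star)]$, which requires no additional work since the assumptions of the corollary are inherited from those of the theorem. This gives
\[
\Prob\bigl(P(\tilde{x}_s) - P(x_\star) \geq \epsilon \bigr)
\leq \frac{\rho^s [P(\tilde{x}_0) - P(x_\star)]}{\epsilon}.
\]

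Finally I would require the right-hand side to be at most $\delta$ and solve for $s$. Setting $\rho^s [P(\tilde{x}_0) - P(x_\star)]/\epsilon \leq \delta$ and taking logarithms (noting $\log(1/\rho) > 0$ since $\rho < 1$) rearranges to
\[
s \geq \frac{\log\bigl([P(\tilde{x}_0) - P(x_\star)]/(\delta\epsilon)\bigr)}{\log(1/\rho)},
\]
which is exactly the condition in the corollary. The complementary probability is then at least $1-\delta$, concluding the argument.

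There is no real obstacle here: the proof is a two-line application of Markov's inequality followed by a logarithmic rearrangement. The only things to be slightly careful about are ensuring nonnegativity of the random variable before invoking Markov and keeping the direction of the inequality correct when dividing by $\log\rho < 0$.
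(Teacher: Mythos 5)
Your proposal is correct and follows exactly the route the paper takes: the nonnegativity of $P(\tilde{x}_s) - P(x_\star)$ justifies Markov's inequality, the geometric bound from Theorem~\ref{thm:prox-svrg} is substituted, and the stated condition on $s$ is obtained by requiring the resulting bound to be at most $\delta$ and taking logarithms. No differences or gaps to report.
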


If $P(x)$ is convex but not strongly convex, then for any $\epsilon>0$, we can define
\[
P_\epsilon(x) = F(x) + R_\epsilon(x) , \qquad 
R_\epsilon(x) = \frac{\epsilon}{2} \|x\|^2 + R(x) .
\]
It follows that $P_\epsilon(x)$ is $\epsilon$-strongly convex. 
We can apply the Prox-SVRG method in Figure~\ref{fig:prox-svrg}
to $P_\epsilon(x)$, which replaces the update formula for $x_k$ by the following update rule:
\[
x_k =\prox_{\eta R_\epsilon} (x_{k-1} - \eta v_k) 
= \argmin_{x \in\reals^d} \left\{ \frac{1}{2} 
  \left\|x-\frac{1}{1+\eta \epsilon} (x_{k-1}-\eta v_k) \right\|^2 
  + \frac{\eta}{1+\eta\epsilon} R(x) \right\} .
\]
Theorem~\ref{thm:prox-svrg} implies the following result.
\begin{corollary}\label{cor:non-sc}
  Suppose Assumption~\ref{asp:smooth-components} holds and let $L_Q=\max_i L_i/(q_i n)$.
  In addition, assume that $0< \eta < 1/(4L_Q)$ and $m$ is sufficiently large so that
  \[
  \rho=\frac{1}{\epsilon \eta(1-4L_Q\eta)m}+\frac{4L_Q\eta(m+1)}{(1-4L_Q\eta)m}<1.
  \]
  Then the Prox-SVRG method in Figure~\ref{fig:prox-svrg}, applied to $P_\epsilon(x)$, achieves
\[
\E P(\tilde{x}_s) \leq  \min_x [ P(x) + (\epsilon/2) \|x\|^2] + \rho^s [P(\tilde{x}_0) + (\epsilon/2) \|\tilde{x}_0\|^2] .
\]
\end{corollary}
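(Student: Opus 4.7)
}
The plan is to reduce to Theorem~\ref{thm:prox-svrg} by viewing the perturbed problem $\minimize_x P_\epsilon(x)$ as a strongly convex instance of \eqref{eqn:min-composite}--\eqref{eqn:avg-loss} in which the smooth average $F(x)$ is unchanged and the new ``regularizer'' is $R_\epsilon(x)=(\epsilon/2)\|x\|^2+R(x)$. The algorithm in Figure~\ref{fig:prox-svrg}, when run on this perturbed problem, produces exactly the iterates described in the corollary, because
\[
\prox_{\eta R_\epsilon}(y)=\argmin_{x}\Bigl\{\tfrac{1}{2}\|x-y\|^2+\tfrac{\eta\epsilon}{2}\|x\|^2+\eta R(x)\Bigr\}=\argmin_{x}\Bigl\{\tfrac{1}{2}\bigl\|x-\tfrac{y}{1+\eta\epsilon}\bigr\|^2+\tfrac{\eta}{1+\eta\epsilon}R(x)\Bigr\},
\]
after completing the square (a constant in $x$ is dropped). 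So the update rule stated in the corollary for $x_k$ really is a Prox-SVRG step on $P_\epsilon$, and no modification of the algorithm or of the proof of Theorem~\ref{thm:prox-svrg} is needed.

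Next, I would verify that Assumptions~\ref{asp:smooth-components} and~\ref{asp:strong-convex} hold for $P_\epsilon$ with the \emph{same} Lipschitz constants $L_i$ (the smooth part $F$ is unchanged, so $L_Q=\max_i L_i/(q_i n)$ is unchanged as well) and with strong-convexity parameter $\mu=\epsilon$ (coming from the added quadratic in $R_\epsilon$). Substituting $\mu=\epsilon$ into the definition of~$\rho$ in Theorem~\ref{thm:prox-svrg} produces precisely the $\rho$ appearing in the corollary, and the hypothesis $0<\eta<1/(4L_Q)$ is the same. Hence Theorem~\ref{thm:prox-svrg} applies verbatim to yield
\[
\E P_\epsilon(\tilde{x}_s)-P_\epsilon(x_\epsilon^\star)\leq \rho^s\bigl[P_\epsilon(\tilde{x}_0)-P_\epsilon(x_\epsilon^\star)\bigr],
\]
where $x_\epsilon^\star=\argmin_x P_\epsilon(x)$, so $P_\epsilon(x_\epsilon^\star)=\min_x[P(x)+(\epsilon/2)\|x\|^2]$.

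The final step is to turn this into a bound on $\E P(\tilde{x}_s)$. Since $(\epsilon/2)\|\tilde{x}_s\|^2\geq 0$, we have $P(\tilde{x}_s)\leq P_\epsilon(\tilde{x}_s)$ pointwise, so
\[
\E P(\tilde{x}_s)\;\leq\;(1-\rho^s)\,P_\epsilon(x_\epsilon^\star)\;+\;\rho^s\,P_\epsilon(\tilde{x}_0)\;\leq\;\min_{x}\bigl[P(x)+(\epsilon/2)\|x\|^2\bigr]\;+\;\rho^s\bigl[P(\tilde{x}_0)+(\epsilon/2)\|\tilde{x}_0\|^2\bigr],
\]
using $P_\epsilon(\tilde{x}_0)=P(\tilde{x}_0)+(\epsilon/2)\|\tilde{x}_0\|^2$ and the identification of $P_\epsilon(x_\epsilon^\star)$ with the infimum in the statement. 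Writing it this way simply drops the nonpositive correction $-\rho^s P_\epsilon(x_\epsilon^\star)$ (under the usual normalization that this minimum is nonnegative, as for standard empirical risks; otherwise one keeps the sharper $(1-\rho^s)$ coefficient).

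There is no serious obstacle here; the only thing to be careful about is that the change of regularizer does not affect the smoothness constants used in the analysis of Theorem~\ref{thm:prox-svrg} (the variance bound in Section~\ref{sec:var-bd} involves only the $f_i$'s) and that the prox step written in the corollary is indeed a faithful implementation of $\prox_{\eta R_\epsilon}$. Both are immediate from the definitions, which is why the corollary is a direct consequence of Theorem~\ref{thm:prox-svrg}.
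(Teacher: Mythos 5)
Your proof is correct and follows exactly the route the paper intends (the paper gives no explicit argument beyond ``Theorem~\ref{thm:prox-svrg} implies the following result''): run Prox-SVRG on $P_\epsilon$, note that $\prox_{\eta R_\epsilon}$ reduces to the stated update, that $L_Q$ is unchanged and the strong convexity parameter is at least $\epsilon$, apply Theorem~\ref{thm:prox-svrg}, and finish with $P\le P_\epsilon$. You also correctly flag the one step the paper glosses over: passing from $(1-\rho^s)\min_x P_\epsilon(x)+\rho^s P_\epsilon(\tilde{x}_0)$ to the stated bound silently drops $-\rho^s\min_x P_\epsilon(x)$, which is only an upper bound when $\min_x P_\epsilon(x)\ge 0$ (true for the nonnegative losses the paper has in mind); otherwise the sharper $(1-\rho^s)$ form should be retained.
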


If $P(x)$ has a minimum and it is achieved by some $x_\star\in\dom(R)$, then 
Corollary~\ref{cor:non-sc} implies
\[
 \E P(\tilde{x}_s) - P(x_\star) \leq  (\epsilon/2) \|x_\star\|^2
+ \rho^s [P(\tilde{x}_0) + (\epsilon/2) \|\tilde{x}_0\|^2] .
\]
This result means that if we take $m= O(L_Q/\epsilon)$ 
and $s\geq\log (1/\epsilon)/\log(1/\rho)$, then 
\[
    \E P(\tilde{x}_s)-P(x_\star) \leq  \epsilon\,[ P(\tilde{x}_0) 
    +(1/2)\|x_\star\|^2 + (\epsilon/2)\|\tilde{x}_0\|^2 ]
\]
The overall complexity 
(in terms of the number of component gradient evaluations) is 
\[
    O\bigl( (n+L_Q/\epsilon)\log(1/\epsilon)\bigr).
\]
Similar results for the case of $R(x)\equiv 0$
have been obtained in 
\cite{LeRouxSchmidtBach12,MahdaviZhangJin13,KonecnyRichtarik13}.
We can also derive a high-probability bound based on 
Corollary~\ref{cor:high-prob}, but omit the details here.

\subsection{Bounding the variance}
\label{sec:var-bd}
Our bound on the variance of the modified stochastic gradient~$v_k$ 
is a corollary of the following lemma.
\begin{lemma}\label{lem:var-bd}
Consider $P(x)$ as defined in~\eqref{eqn:min-composite} 
and~\eqref{eqn:avg-loss}.
Suppose Assumption~\ref{asp:smooth-components} holds,
and let $x_\star=\argmin_x P(x)$ and $L_Q= \max_i L_i/(q_i n)$.
Then
\[
\frac{1}{n}\sum_{i=1}^n \frac{1}{n q_i} \|\nabla f_i(x)-\nabla f_i(x_\star)\|^2
   \leq 2 L_Q\left[ P(x) - P(x_\star)\right] .
\]
\end{lemma}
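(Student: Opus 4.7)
The plan is to reduce the claim to a per-component inequality of the form $\|\nabla f_i(x) - \nabla f_i(x_\star)\|^2 \leq 2 L_i [f_i(x) - f_i(x_\star) - \nabla f_i(x_\star)^T (x-x_\star)]$, which is the standard ``co-coercivity'' consequence of convexity and $L_i$-smoothness. To get it, I would introduce the shifted functions
\[
g_i(y) = f_i(y) - f_i(x_\star) - \nabla f_i(x_\star)^T (y - x_\star),
\]
note that each $g_i$ is convex and $L_i$-smooth (same Hessian behavior as $f_i$), and that $\nabla g_i(x_\star) = 0$, so $x_\star$ globally minimizes $g_i$. Applying the descent lemma for $L_i$-smooth convex functions at $y = x - (1/L_i) \nabla g_i(x)$ then yields $g_i(x) - g_i(x_\star) \geq (1/(2 L_i)) \|\nabla g_i(x)\|^2$, which is exactly the per-component inequality in terms of $f_i$.

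Next I would multiply the per-$i$ inequality by $1/(n^2 q_i)$ and sum over $i$. Because $L_i/(n q_i) \leq L_Q$ by the definition of $L_Q$, the $L_i/(n q_i)$ factor can be pulled out as a uniform $L_Q$, giving
\[
\frac{1}{n}\sum_{i=1}^n \frac{1}{n q_i} \|\nabla f_i(x) - \nabla f_i(x_\star)\|^2
\leq \frac{2 L_Q}{n}\sum_{i=1}^n \bigl[f_i(x) - f_i(x_\star) - \nabla f_i(x_\star)^T (x-x_\star)\bigr].
\]
The sum on the right collapses to $2 L_Q \bigl[F(x) - F(x_\star) - \nabla F(x_\star)^T (x-x_\star)\bigr]$ by the definition of $F$.

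The final step is to pass from an $F$-gap to a $P$-gap. Since $x_\star$ minimizes $P = F + R$, the first-order optimality condition gives $-\nabla F(x_\star) \in \partial R(x_\star)$. Convexity of $R$ then yields $R(x) \geq R(x_\star) - \nabla F(x_\star)^T (x-x_\star)$, so
\[
P(x) - P(x_\star) \geq F(x) - F(x_\star) - \nabla F(x_\star)^T (x-x_\star),
\]
and chaining the two displays finishes the proof.

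The only real obstacle is the first step: spotting that the correct object to apply the ``$\|\nabla g\|^2 \leq 2 L (g(x) - g^\star)$'' bound to is the shifted function $g_i$ rather than $f_i$ itself, since $f_i$'s own minimum is generally not at $x_\star$. Everything afterwards is a straightforward average and an invocation of the subgradient optimality condition for the composite problem, and no calculation requires more than a line.
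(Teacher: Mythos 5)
Your proposal is correct and follows essentially the same route as the paper: the paper also introduces the shifted function $\phi_i(x) = f_i(x) - f_i(x_\star) - \nabla f_i(x_\star)^T(x-x_\star)$, invokes the standard $\frac{1}{2L_i}\|\nabla\phi_i(x)\|^2 \leq \phi_i(x) - \min_y\phi_i(y)$ bound (citing Nesterov, where you instead derive it in one line via a gradient step), averages with the weights $1/(n^2 q_i)$, and converts the $F$-gap to a $P$-gap using $-\nabla F(x_\star)\in\partial R(x_\star)$ and convexity of $R$. No gaps.
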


\begin{proof}
Given any $i\in\{1,\ldots,n\}$, consider the function
\[
    \phi_i(x) = f_i(x) - f_i(x_\star) - \nabla f_i(x_\star)^T(x-x_\star).
\]
It is straightforward to check that $\nabla \phi_i(x_\star)=0$,
hence $\min_x \phi_i(x) =\phi_i(x_\star)= 0$. 
Since $\nabla \phi_i(x)$ is Lipschitz continuous with constant~$L_i$, we have
(see, e.g., \cite[Theorem~2.1.5]{Nesterov04book})
\[
    \frac{1}{2L_i} \|\nabla \phi_i(x)\|^2 \leq \phi_i(x)  - \min_{y} \phi_i(y)
    = \phi_i(x) - \phi_i(x_\star) = \phi_i(x). 
\]
This implies
\[
    \|\nabla f_i(x) - \nabla f_i(x_\star)\|^2 \leq 
    2L_i \left[f_i(x) - f_i(x_\star) - \nabla f_i(x_\star)^T(x-x_\star)\right].
\]
By dividing the above inequality by $1/(n^2 q_i)$, 
and summing over $i=1,\ldots,n$, we obtain
\[
\frac{1}{n}\sum_{i=1}^n\frac{1}{n q_i}\|\nabla f_i(x) - \nabla f_i(x_\star)\|^2
\leq 2L_Q \left[ F(x) - F(x_\star) - \nabla F(x_\star) (x-x_\star)\right].
\]
By the optimality of $x_\star$, i.e., 
\[
    x_\star=\argmin_x P(x) = \argmin_x \left\{ F(x)+R(x)\right\},
\]
there exist $\xi_\star\in\partial R(x_\star)$ such that 
$\nabla F(x_\star) + \xi_\star = 0$.
Therefore
\begin{eqnarray*}
 F(x) - F(x_\star) - \nabla F(x_\star) (x-x_\star) 
&=& F(x) - F(x_\star) + \xi_\star (x-x_\star) \\
&\leq& F(x) - F(x_\star) + R(x) - R(x_\star) \\
&=&  P(x) - P(x_\star),
\end{eqnarray*}
where in the last inequality, we used convexity of $R(x)$.
This proves the desired result.
\end{proof}

\begin{corollary}\label{cor:var-bd}
Consider $v_k$ defined in~\eqref{eqn:modified-sg}.
Conditioned on~$x_{k-1}$, we have $\E v_k=\nabla F(x_{k-1})$ and 
\[
    \E \|v_k-\nabla F(x_{k-1})\|^2 \leq 4 L_Q \bigl[ P(x_{k-1})-P(x_\star)
    + P(\tilde{x}) - P(x_\star) \bigr].
\]
\end{corollary}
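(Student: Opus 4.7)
The plan is to verify the unbiasedness of $v_k$ by direct computation, and then bound the variance by reducing it to two terms of the form handled by Lemma~\ref{lem:var-bd}.

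For unbiasedness, I would expand $\E v_k$ over the sampling distribution $Q$. The factor $1/(q_{i_k} n)$ in the definition of~$v_k$ is precisely the importance weight that makes
\[
\E\!\left[\frac{\nabla f_{i_k}(x)}{q_{i_k} n}\right] = \sum_{i=1}^n q_i\cdot\frac{\nabla f_i(x)}{q_i n} = \nabla F(x).
\]
Applying this to both $x=x_{k-1}$ and $x=\tilde x$ and using linearity of expectation gives $\E v_k = \nabla F(x_{k-1}) - \nabla F(\tilde x) + \nabla F(\tilde x) = \nabla F(x_{k-1})$.

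For the variance bound, the key observation is that
\[
v_k - \nabla F(x_{k-1}) = \zeta_{i_k} - \E \zeta_{i_k}, \qquad \zeta_i \eqdef \frac{\nabla f_i(x_{k-1}) - \nabla f_i(\tilde x)}{q_i n},
\]
so by the elementary identity $\E\|Y - \E Y\|^2 \leq \E\|Y\|^2$ I get $\E\|v_k - \nabla F(x_{k-1})\|^2 \leq \E\|\zeta_{i_k}\|^2$. I would then insert $\pm \nabla f_{i_k}(x_\star)$ inside the norm to split $\zeta_{i_k}$ into two pieces anchored at the optimum, and apply $\|u - w\|^2 \leq 2\|u\|^2 + 2\|w\|^2$ to obtain
\[
\E\|v_k - \nabla F(x_{k-1})\|^2 \leq 2\,\E\Big\|\tfrac{\nabla f_{i_k}(x_{k-1}) - \nabla f_{i_k}(x_\star)}{q_{i_k} n}\Big\|^2 + 2\,\E\Big\|\tfrac{\nabla f_{i_k}(\tilde x) - \nabla f_{i_k}(x_\star)}{q_{i_k} n}\Big\|^2.
\]
Evaluating each expectation over~$i_k\sim Q$ turns the weight $1/(q_{i_k} n)^2$ times $q_{i_k}$ into exactly the quantity $(1/n)\sum_i (1/(n q_i))\|\nabla f_i(\cdot) - \nabla f_i(x_\star)\|^2$ appearing on the left-hand side of Lemma~\ref{lem:var-bd}. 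Applying the lemma at $x=x_{k-1}$ and $x=\tilde x$ respectively yields the factor $2L_Q$ on each of the two terms, producing the claimed bound with overall constant~$4L_Q$.

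I expect no serious obstacle; the main care is bookkeeping the importance weights so that (i) the unbiasedness calculation telescopes cleanly and (ii) each of the two centered terms reduces to the exact form of the expression bounded by Lemma~\ref{lem:var-bd}. The only mildly subtle step is realizing that one can drop the mean inside the variance \emph{before} splitting, which avoids generating cross terms and gives the cleanest constants.
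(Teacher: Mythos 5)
Your proposal is correct and follows essentially the same route as the paper's proof: the same importance-weighting calculation for unbiasedness, the same reduction of the variance via $\E\|Y-\E Y\|^2 \leq \E\|Y\|^2$ (the paper writes the equality and then drops the nonpositive term $-\|\nabla F(x_{k-1})-\nabla F(\tilde x)\|^2$), the same $\pm\nabla f_{i_k}(x_\star)$ splitting with $\|u-w\|^2\leq 2\|u\|^2+2\|w\|^2$, and two applications of Lemma~\ref{lem:var-bd}. No gaps.
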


\begin{proof}
Conditioned on~$x_{k-1}$, we take expectation with respect to~$i_k$ to obtain
\[
    \E \left[\frac{1}{n q_{i_k}}\nabla f_{i_k}(x_{k-1})\right]
    = \sum_{i=1}^n \frac{q_i}{n q_i} \nabla f_i(x_{k-1})
    = \sum_{i=1}^n \frac{1}{n} \nabla f_i(x_{k-1})
    =\nabla F(x_{k-1}).
\]
Similarly we have 
$\E \left[(1/(n q_{i_k}))\nabla f_{i_k}(\tilde{x})\right]=\nabla F(\tilde{x})$,
and therefore
\[
\E v_k = \E \left[\frac{1}{n q_{i_k}}\bigl(\nabla f_{i_k}(x_{k-1}) 
    - \nabla f_{i_k}(\tilde{x})\bigr) + \nabla F(\tilde{x}) \right]
    = \nabla F(x_{k-1}).
\]
To bound the variance, we have
\begin{eqnarray*}
\E \|v_k-\nabla F(x_{k-1})\|^2 
&=& \E \biggl\|\frac{1}{n q_{i_k}}\bigl(\nabla f_{i_k} (x_{k-1})-\nabla f_{i_k}(\tilde{x})\bigr) +\nabla F(\tilde{x}) - \nabla F(x_{k-1}) \biggr\|^2 \\
    &=& \E \frac{1}{(n q_{i_k})^2}\|\nabla f_{i_k} (x_{k-1}) - \nabla f_{i_k}(\tilde{x}) \|^2
    - \|\nabla F(x_{k-1}) - \nabla F(\tilde{x})\|^2 \\
    &\leq& \E \frac{1}{(n q_{i_k})^2}\|\nabla f_{i_k} (x_{k-1}) - \nabla f_{i_k}(\tilde{x}) \|^2  \\
    &\leq&  \E \frac{2}{(n q_{i_k})^2}\|\nabla f_{i_k} (x_{k-1}) - \nabla f_{i_k}(x_\star) \|^2 
        + \E \frac{2}{(n q_{i_k})^2}\|\nabla f_{i_k} (\tilde{x}) - \nabla f_{i_k}(x_\star) \|^2 \\
        &=& \frac{2}{n}\sum_{i=1}^n \frac{1}{n q_i}\|\nabla f_i (x_{k-1}) - \nabla f_i(x_\star) \|^2 + \frac{2}{n}\sum_{i=1}^n \frac{1}{n q_i}\|\nabla f_i (\tilde{x}) - \nabla f_i(x_\star) \|^2 \\
    &\leq& 4 L_Q \bigl[ P(x_{k-1})-P(x_\star) + P(\tilde{x}) - P(x_\star) \bigr].
\end{eqnarray*}
In the second equality above, we used the fact that for any random 
vector~$\zeta\in\reals^d$, it holds that
$\E\|\zeta-\E\zeta\|^2 = \E\|\zeta\|^2 - \|\E\zeta\|^2$.
In the second inequality, we used $\|a+b\|^2\leq 2\|a\|^2+2\|b\|^2$. 
In the last inequality, we applied Lemma~\ref{lem:var-bd} twice.
\end{proof}

\subsection{Proof of Theorem~\ref{thm:prox-svrg}}

For convenience, we define the \emph{stochastic gradient mapping} 
\[
    g_k = \frac{1}{\eta}(x_{k-1} - x_k) 
    =\frac{1}{\eta}\left( x_{k-1} - \prox_{\eta R}(x_{k-1} - \eta v_k)\right),
\]
so that the proximal gradient step~\eqref{eqn:prox-svrg-update} can be 
written as
\begin{equation}\label{eqn:psg-grad-map}
    x_k = x_{k-1} - \eta g_k.
\end{equation}

We need the following lemmas in the convergence analysis.
The first one is on the \emph{non-expansiveness} of proximal mapping,
which is well known (see, e.g., \cite[Section~31]{Rockafellar70book}).
\begin{lemma}\label{lem:non-expansive}
    Let $R$ be a closed convex function on $\reals^d$ and $x,y\in\dom(R)$. 
    Then
    \[
        \bigl\|\prox_R(x) - \prox_R(y)\bigr\| \leq \|x-y\|.
    \]
\end{lemma}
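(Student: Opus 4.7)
The plan is to use the first-order optimality characterization of the proximal mapping together with monotonicity of the subdifferential of $R$. First, I would recall that $p=\prox_R(x)$ is equivalent to the inclusion $x-p\in\partial R(p)$; this follows immediately from writing the subdifferential optimality condition for the strongly convex objective $(1/2)\|u-x\|^2+R(u)$ at its unique minimizer $p$.

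Applying this characterization to both $u:=\prox_R(x)$ and $v:=\prox_R(y)$ yields $x-u\in\partial R(u)$ and $y-v\in\partial R(v)$. The monotonicity of $\partial R$, a standard consequence of convexity of $R$, then gives
\[
\langle (x-u)-(y-v),\,u-v\rangle \geq 0,
\]
which rearranges to the ``firmly non-expansive'' inequality
\[
\|u-v\|^2 \leq \langle x-y,\,u-v\rangle.
\]

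Applying Cauchy--Schwarz to the right-hand side produces $\|u-v\|^2 \leq \|x-y\|\cdot\|u-v\|$, and dividing by $\|u-v\|$ (the case $u=v$ being trivial) gives the desired bound. The only substantive ingredient is monotonicity of $\partial R$; everything else is mechanical. I do not anticipate any real obstacle, since the argument is classical and the lemma is usually invoked without proof in treatments of proximal methods. If one wanted a slicker presentation, one could instead quote the firm non-expansiveness of the resolvent operator $(I+\partial R)^{-1}$, which implies the stated inequality in one line, but the subdifferential argument above is self-contained and elementary.
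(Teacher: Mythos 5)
Your argument is correct and is precisely the classical proof the paper relies on by citation (it states the lemma as well known and refers to Rockafellar rather than proving it): optimality condition $x-\prox_R(x)\in\partial R(\prox_R(x))$, monotonicity of $\partial R$ to get firm non-expansiveness $\|u-v\|^2\leq \langle x-y,\,u-v\rangle$, then Cauchy--Schwarz. No gaps; this matches the standard treatment.
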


The next lemma provides a lower bound of the function $P(x)$ using 
stochastic gradient mapping.
It is a slight generalization of \cite[Lemma~3]{HuKwokPan09nips}, and we give
the proof in Appendix~\ref{apd:composite-lb} for completeness.

\begin{lemma}\label{lem:composite-lb}
    Let $P(x)=F(x)+R(x)$, where $\nabla F(x)$ is Lipschitz continuous with
    parameter~$L$, and $F(x)$ and $R(x)$ has strong convexity parameters 
    $\mu_F$ and $\mu_R$ respectively.
    For any $x\in\dom(R)$ and arbitrary $v\in\reals^d$, define
    \begin{eqnarray*}
        x^+ &=& \prox_{\eta R} (x - \eta v) \\
        g &=& \frac{1}{\eta} (x-x^+) \\
        \Delta &=& v - \nabla F(x),
    \end{eqnarray*}
    where $\eta$ is a step size satisfying $0<\eta\leq 1/L$.
    Then we have for any $y\in\reals^d$,
    \[
        P(y) \geq P(x^+) + g^T(y-x) + \frac{\eta}{2}\|g\|^2 
        + \frac{\mu_F}{2}\|y-x\|^2 + \frac{\mu_R}{2}\|y-x^+\|^2
        + \Delta^T(x^+ - y).
    \]
\end{lemma}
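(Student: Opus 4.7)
The plan is to combine three classical convex-analysis ingredients: the subgradient characterization of the proximal step, strong convexity of $F$ at $x$ and of $R$ at $x^+$, and the descent lemma for $F$ between $x$ and $x^+$. All the required inequalities are standard; the only real work is to track how the perturbation term $\Delta = v - \nabla F(x)$ propagates, since the proximal step is driven by $v$ rather than by $\nabla F(x)$.

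First, I would read off the optimality condition for $x^+$. Since $x^+ = \argmin_z \{\tfrac{1}{2\eta}\|z-(x-\eta v)\|^2 + R(z)\}$, the first-order condition yields $g - v \in \partial R(x^+)$, where $g = (x - x^+)/\eta$. With this particular subgradient in hand, strong convexity of $R$ at $x^+$ gives, for any $y\in\reals^d$,
\[
R(y) \geq R(x^+) + (g-v)^T(y - x^+) + \frac{\mu_R}{2}\|y-x^+\|^2.
\]
Next, strong convexity of $F$ at $x$ provides
\[
F(y) \geq F(x) + \nabla F(x)^T(y-x) + \frac{\mu_F}{2}\|y-x\|^2,
\]
and the standard descent lemma for the $L$-smooth function $F$, evaluated between $x$ and $x^+ = x - \eta g$, combined with $\eta\leq 1/L$, yields
\[
F(x^+) \leq F(x) - \eta\nabla F(x)^T g + \frac{\eta}{2}\|g\|^2.
\]

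Finally, I would add the two lower bounds to get a lower bound on $P(y)=F(y)+R(y)$, then substitute the upper bound on $F(x^+)$ rewritten as a lower bound $F(x) \geq F(x^+) + \eta\nabla F(x)^T g - \tfrac{\eta}{2}\|g\|^2$. After this substitution the right-hand side contains $P(x^+)$ plus a collection of linear and quadratic terms in $g$, $\nabla F(x)$, $v$ and the displacements $y-x$, $y-x^+$. The verification of the claimed form reduces to an identity: using $x^+ - x = -\eta g$, one checks by direct expansion that
\[
\eta\nabla F(x)^T g - \tfrac{\eta}{2}\|g\|^2 + \nabla F(x)^T(y-x) + (g-v)^T(y-x^+)
\;=\; g^T(y-x) + \tfrac{\eta}{2}\|g\|^2 + \Delta^T(x^+ - y),
\]
so the combined inequality becomes exactly the one stated in the lemma.

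The main obstacle is not any single estimate — each of the three ingredients is textbook — but rather the careful algebraic reorganization in the last step, in particular making sure that the difference $v - \nabla F(x)$ reassembles cleanly into the single term $\Delta^T(x^+ - y)$. The only substantive use of $\eta\leq 1/L$ occurs when passing from $\tfrac{L\eta^2}{2}\|g\|^2$ to $\tfrac{\eta}{2}\|g\|^2$ in the descent lemma; all other steps hold under convexity alone.
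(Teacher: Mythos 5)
Your proposal is correct and follows essentially the same route as the paper's proof in Appendix~\ref{apd:composite-lb}: the subgradient identity $\xi = g - v$ from the proximal optimality condition, strong convexity of $F$ at $x$ and of $R$ at $x^+$, the descent lemma linking $F(x)$ and $F(x^+)$, and the same algebraic regrouping of inner products using $x - x^+ = \eta g$. The only (immaterial) difference is that you invoke $\eta \leq 1/L$ inside the descent lemma, whereas the paper carries the coefficient $\frac{\eta}{2}(2 - L\eta)$ to the end before applying that bound.
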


Now we proceed to prove Theorem~\ref{thm:prox-svrg}. 
We start by analyzing how the distance between $x_k$ and $x_\star$ changes
in each iteration.
Using the update rule~\eqref{eqn:psg-grad-map}, we have
\begin{eqnarray*}
\|x_k - x_\star\|^2 
&=& \|x_{k-1} - \eta g_k - x_\star\|^2 \\
&=& \|x_{k-1} - x_\star\|^2 - 2\eta g_k^T(x_{k-1}-x_\star) + \eta^2 \|g_k\|^2 .
\end{eqnarray*}
Applying Lemma~\ref{lem:composite-lb} with $x=x_{k-1}$, $v=v_k$, $x^+=x_k$,
$g=g_k$ and $y=x_\star$, we have
\[
    -g_k^T(x_{k-1}-x_\star) + \frac{\eta}{2}\|g_k\|^2 
    \leq P(x_\star) - P(x_k) - \frac{\mu_F}{2}\|x_{k-1}-x_\star\|^2
    -\frac{\mu_R}{2}\|x_k-x_\star\|^2 - \Delta_k^T(x_k-x_\star),
\]
where $\Delta_k = v_k - \nabla F(x_{k-1})$. 
Note that the assumption in Theorem~\ref{thm:prox-svrg} implies
$\eta<1/(4L_Q)<1/L$ because $L_Q\geq(1/n)\sum_{i=1}^n L_i \geq L$.
Therefore,
\begin{eqnarray}
\|x_k-x_\star\|^2 
&\leq& \|x_{k-1} - x_\star\|^2 
- \eta \mu_F\|x_{k-1}-x_\star\|^2 -\eta\mu_R\|x_k-x_\star\|^2 \nonumber\\
&&- 2\eta [ P(x_k)-P(x_\star)] - 2\eta \Delta_k^T(x_k-x_\star) \nonumber \\
&\leq& \|x_{k-1} - x_\star\|^2 
- 2\eta [ P(x_k)-P(x_\star)] - 2\eta \Delta_k^T(x_k-x_\star) 
\label{eqn:distance-bd}
\end{eqnarray}

Next we upper bound the quantity $-2\eta \Delta_k^T(x_k-x_\star)$.
Although not used in the Prox-SVRG algorithm, 
we can still define the proximal full gradient update as
\[
    \bar{x}_k=\prox_{\eta R} (x_{k-1} - \eta \nabla F(x_{k-1})),
\]
which is independent of the random variable~$i_k$.
Then,
\begin{eqnarray*}
   -2\eta\Delta_k^T(x_k-x_\star) 
&=& -2\eta\Delta_k^T(x_k-\bar{x}_k) -2\eta\Delta_k^T(\bar{x}_k-x_\star) \\
&\leq& 2\eta\|\Delta_k\| \|x_k-\bar{x}_k\|-2\eta\Delta_k^T(\bar{x}_k-x_\star)\\
&\leq& 2\eta\|\Delta_k\| \left\| (x_{k-1}-\eta v_k) - \bigl(x_{k-1} - \eta \nabla F(x_{k-1})\bigr) \right\| -2\eta\Delta_k^T(\bar{x}_k-x_\star) \\
&=& 2\eta^2 \|\Delta_k\|^2 -2\eta\Delta_k^T(\bar{x}_k-x_\star),
\end{eqnarray*}
where in the first inequality we used the Cauchy-Schwarz inequality,
and in the second inequality we used~Lemma~\ref{lem:non-expansive}.
Combining with~\eqref{eqn:distance-bd}, we get
\[
\|x_k-x_\star\|^2 
\leq \|x_{k-1} - x_\star\|^2  - 2\eta [ P(x_k)-P(x_\star)] 
+ 2\eta^2\|\Delta_k\|^2 - 2\eta \Delta_k^T(\bar{x}_k-x_\star).
\]
Now we take expectation on both sides of the above inequality with respect
to~$i_k$ to obtain 
\[
\E \|x_k-x_\star\|^2 
\leq \|x_{k-1} - x_\star\|^2 
 - 2\eta [\,\E P(x_k)-P(x_\star)] 
+ 2\eta^2 \, \E \|\Delta_k\|^2 
- 2\eta \, \E [\Delta_k^T(\bar{x}_k-x_\star)].
\]
We note that both $\bar{x}_k$ and $x_\star$ are independent of the 
random variable $i_k$ and $\E \Delta_k=0$, so 
\[
    \E [\Delta_k^T(\bar{x}_k-x_\star)] = (\E \Delta_k)^T(\bar{x}_k-x_\star)=0.
\]
In addition, we can bound the term $\E \|\Delta_k\|^2$ using 
Corollary~\ref{cor:var-bd} to obtain
\[
\E \|x_k-x_\star\|^2 
\leq \|x_{k-1} - x_\star\|^2  - 2\eta [\,\E P(x_k)-P(x_\star)] 
+ 8 L_Q \eta^2 [ P(x_{k-1})-P(x_\star) + P(\tilde{x}) - P(x_\star) ] .
\]

We consider a fixed stage~$s$, so that $x_0=\tilde{x}=\tilde{x}_{s-1}$
and $\tilde{x}_s=\frac{1}{m}\sum_{k=1}^m x_k$.
By summing the previous inequality over $k=1,\ldots,m$ and taking expectation
with respect to the history of random variables $i_1,\ldots,i_m$, we obtain
\begin{eqnarray*}
&&  \E \|x_m-x_\star\|^2 + 2\eta[\,\E P(x_m)-P(x_\star)] 
    +2\eta(1-4L_Q \eta)\sum_{k=1}^{m-1}[\,\E P(x_k)-P(x_\star)] \\
&\leq& \|x_0-x_\star\|^2 
+ 8 L_Q \eta^2 \bigl[ P(x_0)-P(x_\star) + m(P(\tilde{x})-P(x_\star)) \bigr].
\end{eqnarray*}
Notice that $2\eta(1-4L_Q\eta)<2\eta$ and $x_0=\tilde{x}$,  so we have
\[
    2\eta(1-4L_Q\eta)\sum_{k=1}^m[\,\E P(x_k)-P(x_\star)] 
    \leq \|\tilde{x}-x_\star\|^2 
+ 8 L_Q\eta^2 (m+1) [ P(\tilde{x})-P(x_\star) ].
\]
By convexity of $P$ and definition of $\tilde{x}_s$, we have 
$P(\tilde{x}_s)\leq \frac{1}{m}\sum_{t=1}^m P(x_k)$.
Moreover, strong convexity of~$P$ implies
$\|\tilde{x}-x_\star\|^2 \leq \frac{2}{\mu} [P(\tilde{x})-P(\star)]$.
Therefore, we have
\[
    2\eta(1-4L_Q\eta) m [\, \E P(\tilde{x}_s) - P(x_\star) ]
    \leq \left(\frac{2}{\mu} + 8L_Q\eta^2(m+1)\right)
    [P(\tilde{x}_{s-1}) - P(x_\star)] .
\]
Divide both sides of the above inequality by $2\eta(1-4L_Q\eta) m$, 
we arrive at
\[
\E P(\tilde{x}_s) - P(x_\star) \leq 
\left(\frac{1}{\mu\eta(1-4L_Q\eta)m} + \frac{4L_Q\eta(m+1)}{(1-4L_Q\eta)m}\right)
[P(\tilde{x}_{s-1}) - P(x_\star)].
\]
Finally using the definition of~$\rho$ in~\eqref{eqn:converge-factor}, 
and applying the above inequality recursively, we obtain
\[
    \E P(\tilde{x}_s) - P(x_\star) \leq \rho^s [P(\tilde{x}_0)-P(x_\star)],
\]
which is the desired result.

\section{Numerical experiments}

In this section we present results of several numerical experiments to
illustrate the properties of the Prox-SVRG method,
and compare its performance with several related algorithms. 

We focus on the regularized logistic regression problem for 
binary classification: given a set of training examples 
$(a_1,b_1),\ldots, (a_n, b_n)$ where $a_i\in\reals^d$ and $b_i\in\{+1, -1\}$,
we find the optimal predictor $x\in\reals^d$ by solving
\[
 \minimize_{x\in\reals^d} \quad 
    \frac{1}{n} \sum_{i=1}^n \log\bigl(1+\exp(-b_i a_i^T x)\bigr)
    + \frac{\lambda_2}{2} \|x\|_2^2 + \lambda_1 \|x\|_1 ,
\]
where $\lambda_2$ and $\lambda_1$ are two regularization parameters.
The $\ell_1$ regularization is added to promote sparse solutions.
In terms of the model~\eqref{eqn:min-composite} and~\eqref{eqn:avg-loss}, 
we can have either 
\begin{equation}\label{eqn:L2inF}
    f_i(x)=\log(1+\exp(-b_i a_i^T x)  + (\lambda_2/2)\|x\|_2^2, \qquad
    R(x)=\lambda_1\|x\|_1,
\end{equation}
or
\begin{equation}\label{eqn:L2inR}
    f_i(x)=\log(1+\exp(-b_i a_i^T x), \qquad
    R(x)=(\lambda_2/2)\|x\|_2^2+\lambda_1\|x\|_1,
\end{equation}
depending on the algorithm used.

We used three publicly available data sets.
Their sizes~$n$, dimensions~$d$ as well as sources as listed in
Table~\ref{tab:datasets}.
For \texttt{rcv1} and \texttt{covertype}, we used the processed data
for binary classification from \cite{LIBSVMdata}.
The table also listed the values of~$\lambda_2$ and~$\lambda_1$
that were used in our experiments.
These choices are typical in machine learning benchmarks to obtain 
good classification performance.

\begin{table}[t]
\centering
\begin{tabular}{|c|r|r|c|c|c|}
\hline
data sets & $n\quad$ & $d\quad$ & source & $\lambda_2$ & $\lambda_1$ \\
\hline
\texttt{rcv1}    & 20,242 &  47,236 & \cite{RCV1} & $10^{-4}$ & $10^{-5}$ \\
\texttt{covertype} & 581,012 & 54 & \cite{covertype} & $10^{-5}$ & $10^{-4}$ \\ 
\texttt{sido0}    & 12,678 &  4,932 & \cite{sido} & $10^{-4}$ & $10^{-4}$ \\
\hline
\end{tabular}
\caption{Summary of data sets 
and regularization parameters used in our experiments.}
\label{tab:datasets}
\end{table}

\subsection{Properties of Prox-SVRG}

We first illustrate the numerical characteristics of Prox-SVRG on
the \texttt{rcv1} dataset.
Each example in this dataset has been normalized so that $\|a_i\|_2=1$ for 
all $i=1,\ldots,n$, which leads to the same upper bound on the Lipschitz
constants~$L=L_i=\|a_i\|_2^2/4$.
In our implementation, we used the splitting in~\eqref{eqn:L2inF}
and uniform sampling of the component functions.
We choose the number of stochastic gradient steps~$m$ between full
gradient evaluations as a small multiple of~$n$.

Figure~\ref{fig:rcv1tr-eta} shows the behavior of Prox-SVRG with $m=2n$
when we used three different step sizes.
The horizontal axis is the number of effective passes over the data, where
each effective pass evaluates~$n$ component gradients.
Each full gradient evaluation counts as one effective pass, and appears as
a small flat segment of length~1 on the curves.
It can be seen that the convergence of Prox-SVRG 
becomes slow if the step size is either too big or too small.
The best choice of $\eta=0.1/L$ matches our theoretical analysis
(see the first remark after Theorem~\ref{thm:prox-svrg}).
The number of non-zeros (NNZs) in the iterates $x_k$ converges
quickly to $7237$ after about~$10$ passes over the data.

\begin{figure}[t]
    \centering
    \includegraphics[width=0.49\textwidth]{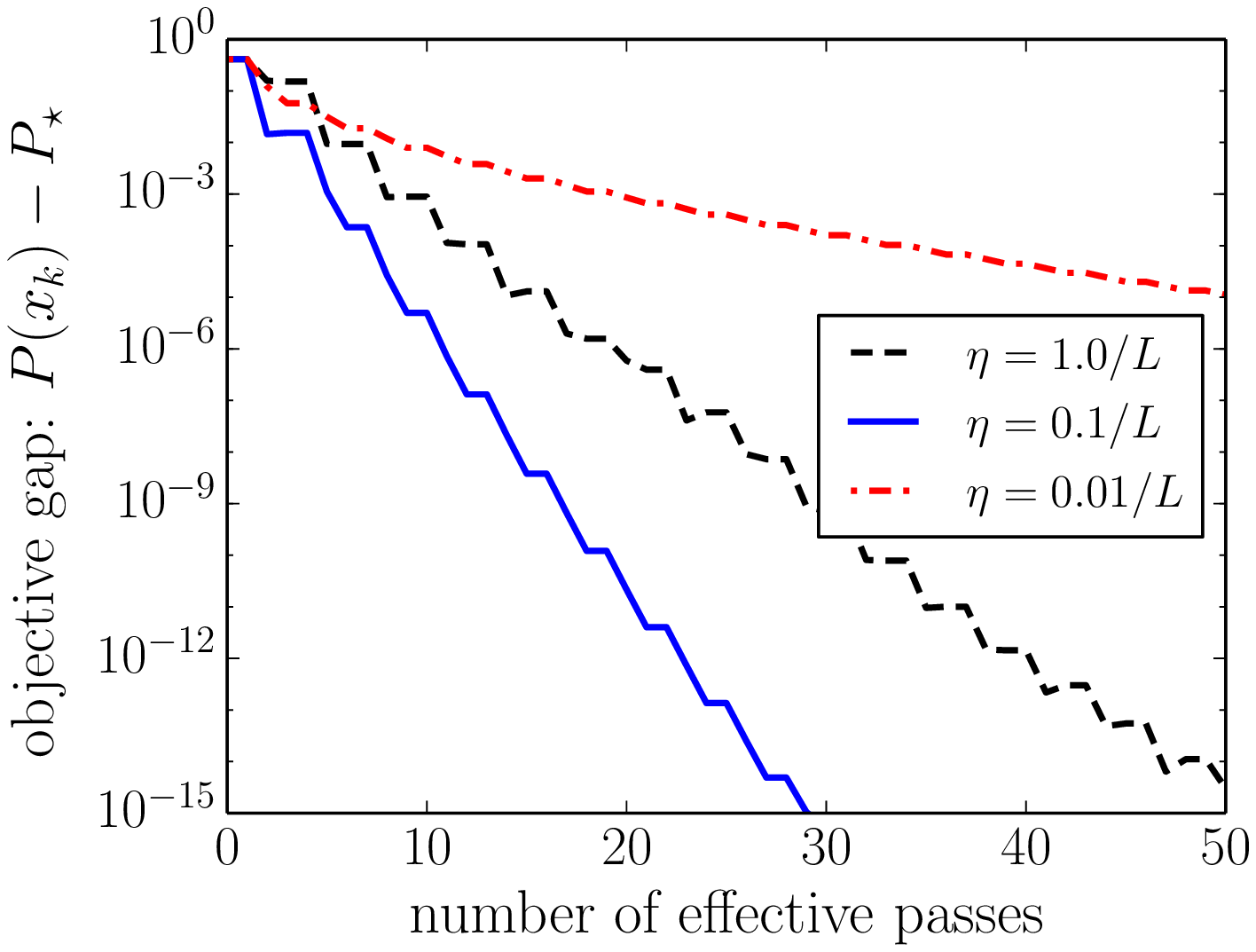}
    \includegraphics[width=0.49\textwidth]{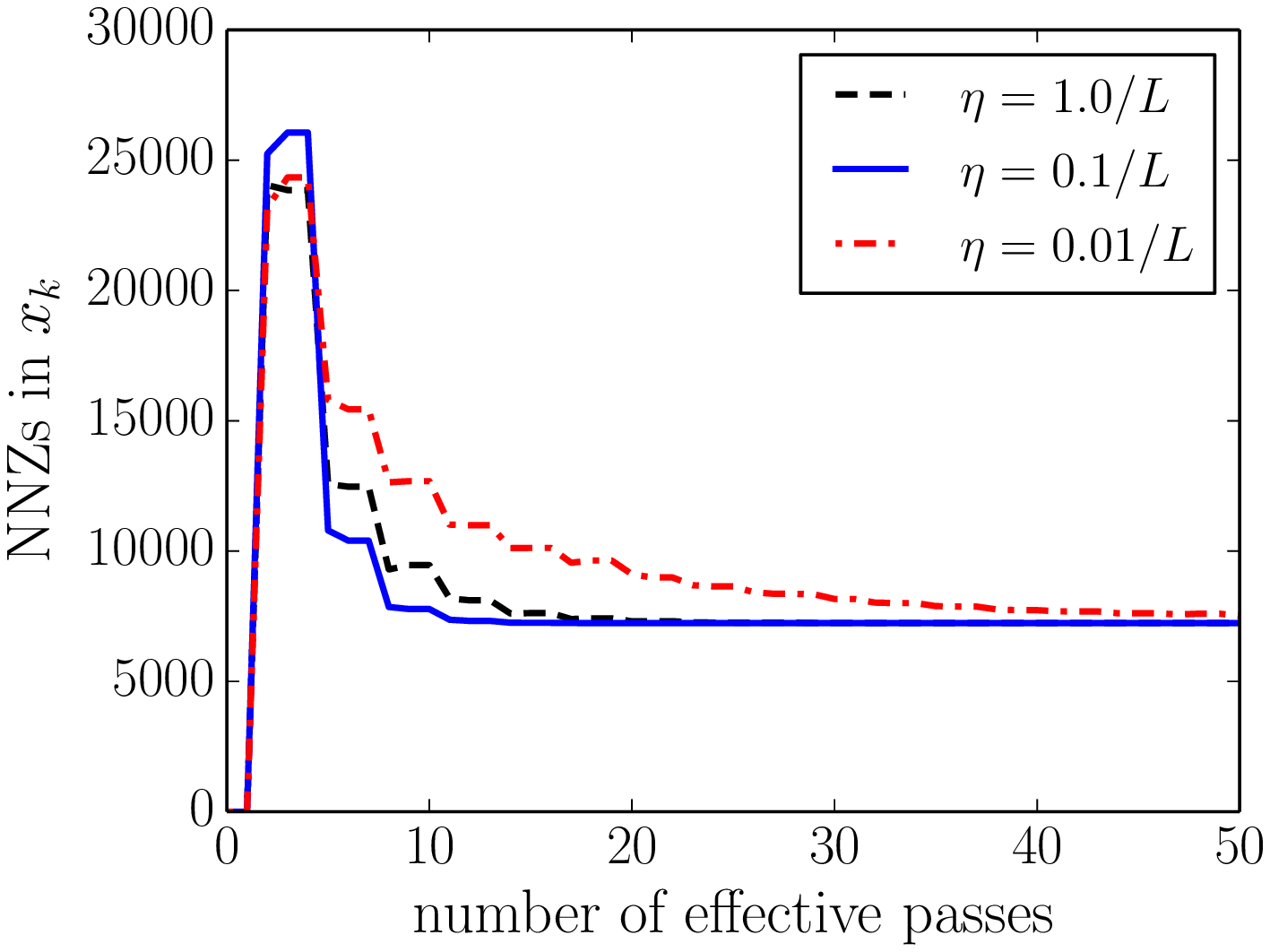}
    \caption{Prox-SVRG on the \texttt{rcv1} dataset: varying the step size~$\eta$ with $m=2n$.}
    \label{fig:rcv1tr-eta}
\end{figure}

\begin{figure}[t]
    \centering
    \includegraphics[width=0.49\textwidth]{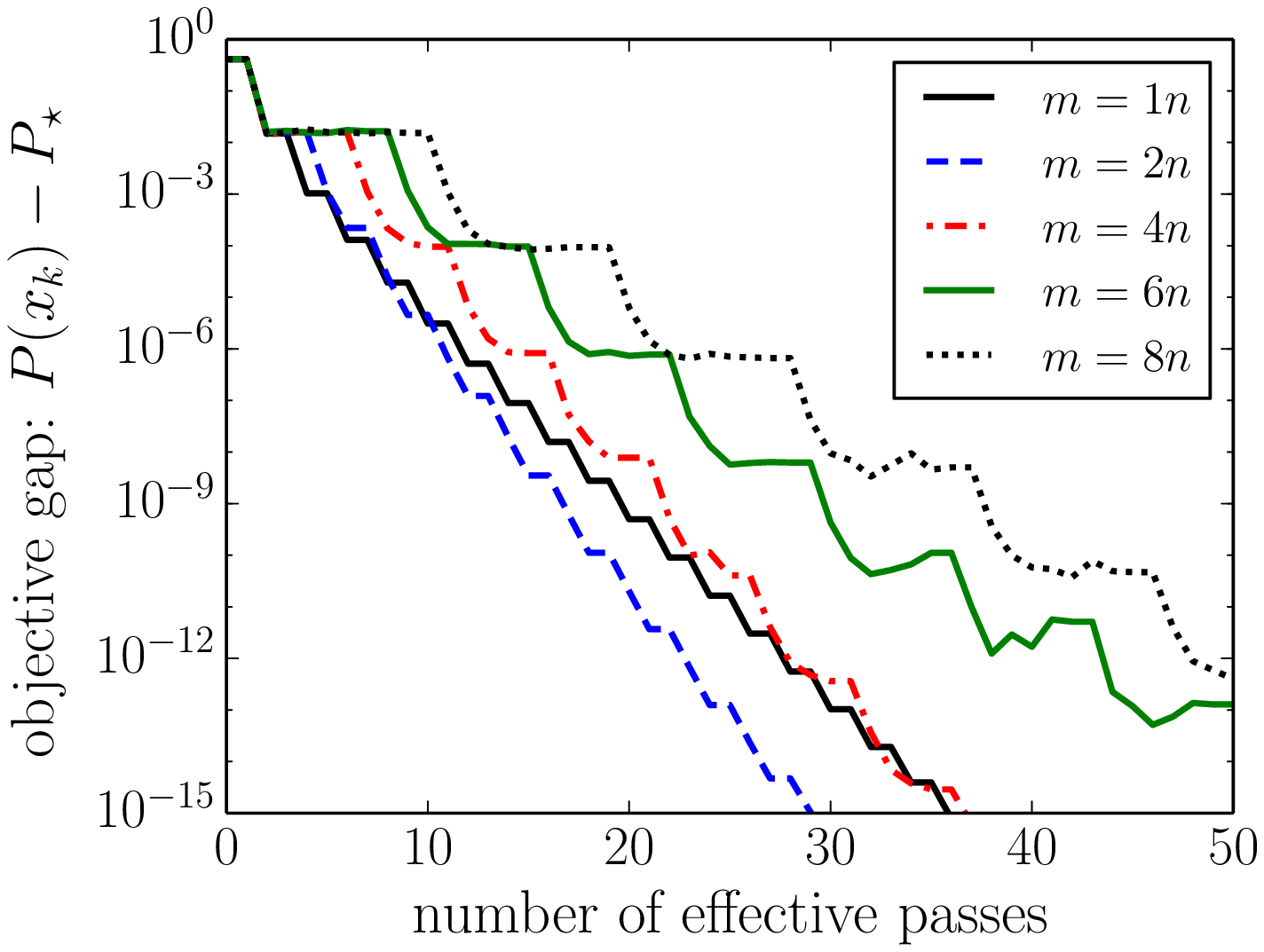}
    \includegraphics[width=0.49\textwidth]{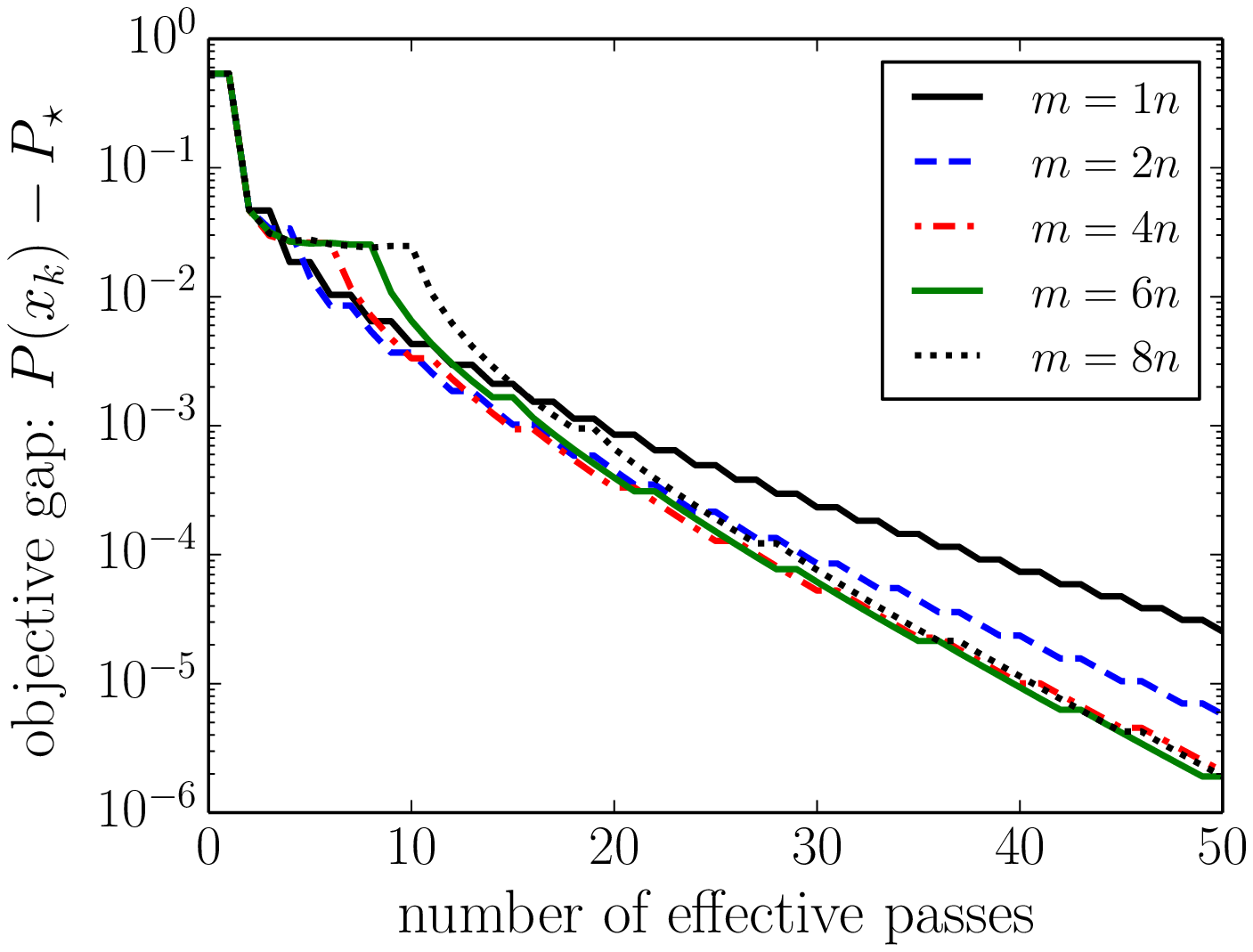}
    \caption{Prox-SVRG on the \texttt{rcv1} dataset with step size
    $\eta=0.1/L$: varying the period~$m$ between full gradient evaluations, 
    with $\lambda_2=10^{-4}$ on the left and $\lambda_2=10^{-5}$ on the right.}
    \label{fig:rcv1tr-vr}
\end{figure}

Figure~\ref{fig:rcv1tr-vr} shows how the objective gap 
$P(x_k)-P_\star$ decreases when we vary the period~$m$ of evaluating
full gradients.
For $\lambda_2=10^{-4}$, the fastest convergence per stage is achieved
by $m=1$, but the frequent evaluation of full gradients makes its overall
performance slightly worse than $m=2$.
Longer periods leads to slower convergence, 
due to the lack of effective variance reduction.
For $\lambda_2=10^{-5}$, the condition number is much larger, thus longer
period~$m$ is required to have sufficient reduction during each stage.

\subsection{Comparison with related algorithms}

\begin{figure}[t]
    \centering
    \includegraphics[width=0.49\textwidth]{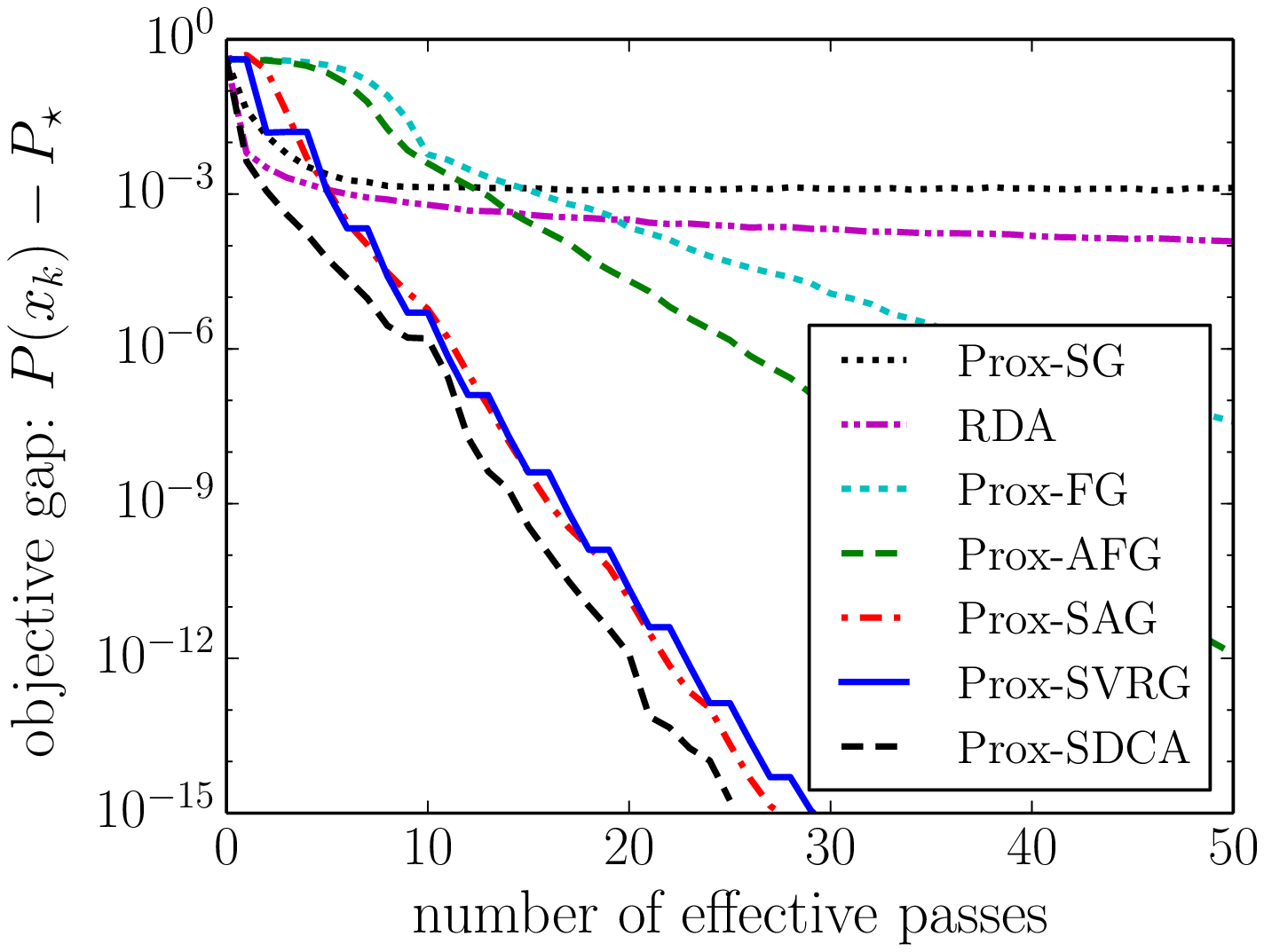}
    \includegraphics[width=0.49\textwidth]{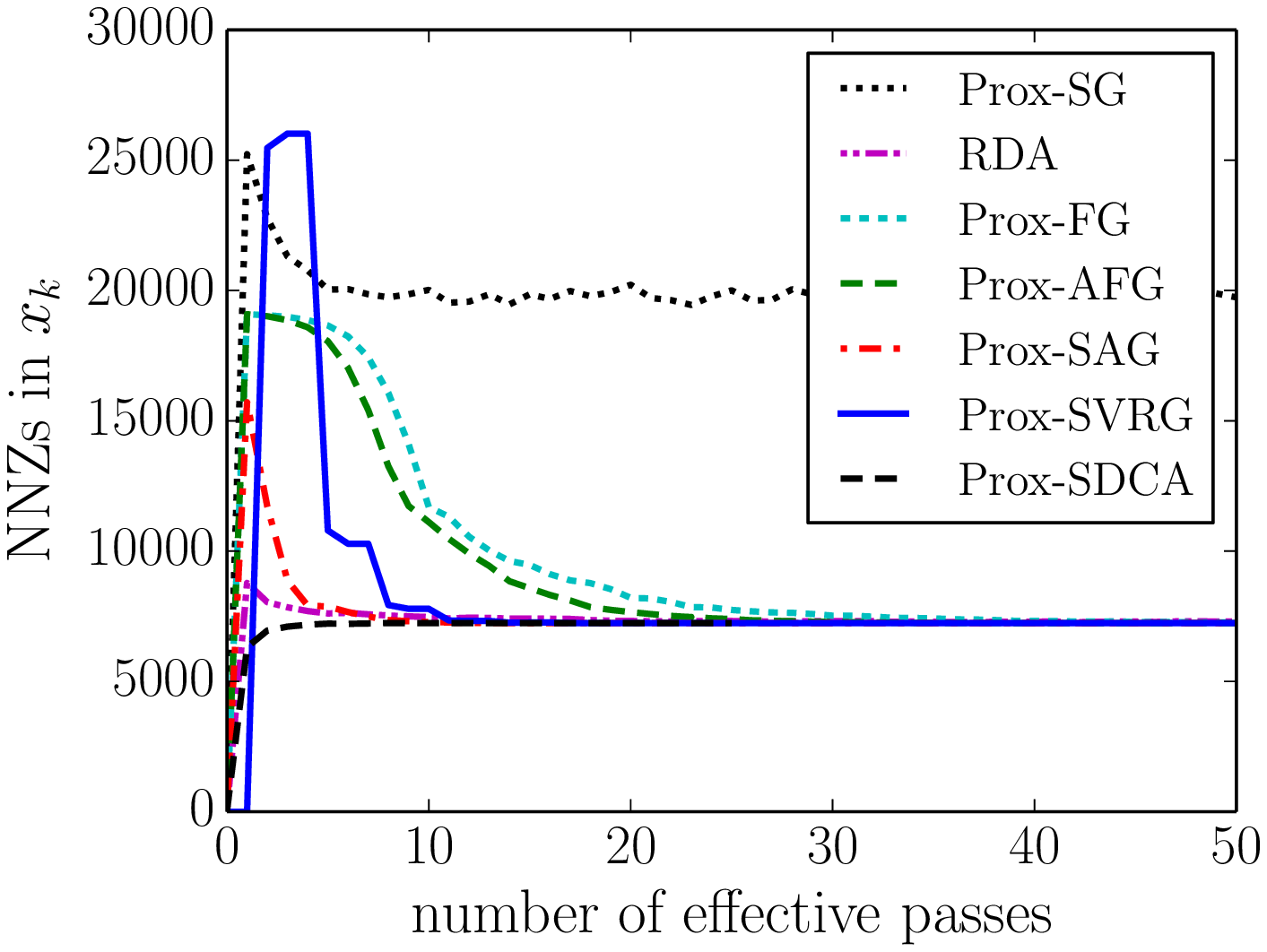}
    \caption{Comparison of different methods on the \texttt{rcv1} dataset.}
    \label{fig:rcv1tr-all}
\end{figure}

We implemented the following algorithms to compare with Prox-SVRG:
\begin{itemize}\itemsep 0pt
    \item Prox-SG: the proximal stochastic gradient method given
        in~\eqref{eqn:prox-sg}.
        We used a constant step size that gave the best performance among 
        all powers of~$10$.
    \item RDA: the regularized dual averaging method in \cite{Xiao2010RDA}.
        The step size parameter~$\gamma$ in RDA is also chosen as the one
        that gave best performance among all powers of~$10$.
    \item Prox-FG: the proximal full gradient method given 
        in~\eqref{eqn:prox-fg}, with an adaptive line search scheme
        proposed in \cite{Nesterov13composite}.
    \item Prox-AFG: an accelerated version of the Prox-FG method that is
        very similar to FISTA \cite{BeckTeboulle09}, also with an 
        adaptive line search scheme.
    \item Prox-SAG: a proximal version of the stochastic average gradient (SAG)
        method \cite[Section~6]{SchmidtLeRouxBach13}.
        We note that the convergence of this Prox-SAG method has not
        been established for the general model considered in this paper.
        Nevertheless it demonstrates good performance in practice.
    \item Prox-SDCA: the proximal stochastic dual coordinate ascent method
        \cite{SSZhang12prox-SDCA}.
        In order to obtain the complexity 
        $O\left( (n+L/\mu)\log(1/\epsilon)\right)$, it needs to use the
        splitting~\eqref{eqn:L2inR}.
\end{itemize}

Figure~\ref{fig:rcv1tr-all} shows the comparison of Prox-SVRG 
($m=2n$ and $\eta=0.1/L$) with different methods described above
on the \texttt{rcv1} dataset.
For the Prox-SAG method, we used the same step size $\eta=0.1/L$ 
as for Prox-SVRG.
We can see that the three methods that performed best are
Prox-SAG, Prox-SVRG and Prox-SDCA. 
The superior performance of Prox-SVRG and Prox-SDCA are predicted by their 
low complexity analysis. 
While the complexity of Prox-SAG has not been formally established,
its performance is among the best.
In terms of obtaining sparse iterates under the $\ell_1$-regularization,
RDA, Prox-SDCA and Prox-SAG converged to the correct NNZs quickly, 
followed by Prox-SVRG and the two full gradient methods.
The Prox-SG method didn't converge to the correct NNZs.

Figure~\ref{fig:cov-sido} shows the comparison of different methods
on two other data sets listed in Table~\ref{tab:datasets}.
Here we also included comparison with Prox-SVRG2,
which is a hybrid method by performing Prox-SG for one pass over the data
and then switch to Prox-SVRG.
This hybrid scheme was suggested in \cite{JohnsonZhang13},
and it often improves the performance of Prox-SVRG substantially.
Similar hybrid schemes also exist for SDCA \cite{SSZhang12prox-SDCA}
and SAG \cite{SchmidtLeRouxBach13}.

The behaviors of the stochastic gradient type of algorithms on 
\texttt{covertype} (Figure~\ref{fig:cov-sido}, left) 
are similar to those on \texttt{rcv1}, but
the two full gradient methods Prox-FG and Prox-AFG perform worse
because of the smaller regularization parameter~$\lambda_2$ and hence
worse condition number.
The \texttt{sido0} data set turns out to be more difficult to optimize,
and much slower convergence are observed in Figure~\ref{fig:cov-sido} (right).
The Prox-SAG method performs best on this data set, followed by
Prox-SVRG2 and Prox-SVRG.

\begin{figure}[t]
    \centering
    \includegraphics[width=0.49\textwidth]{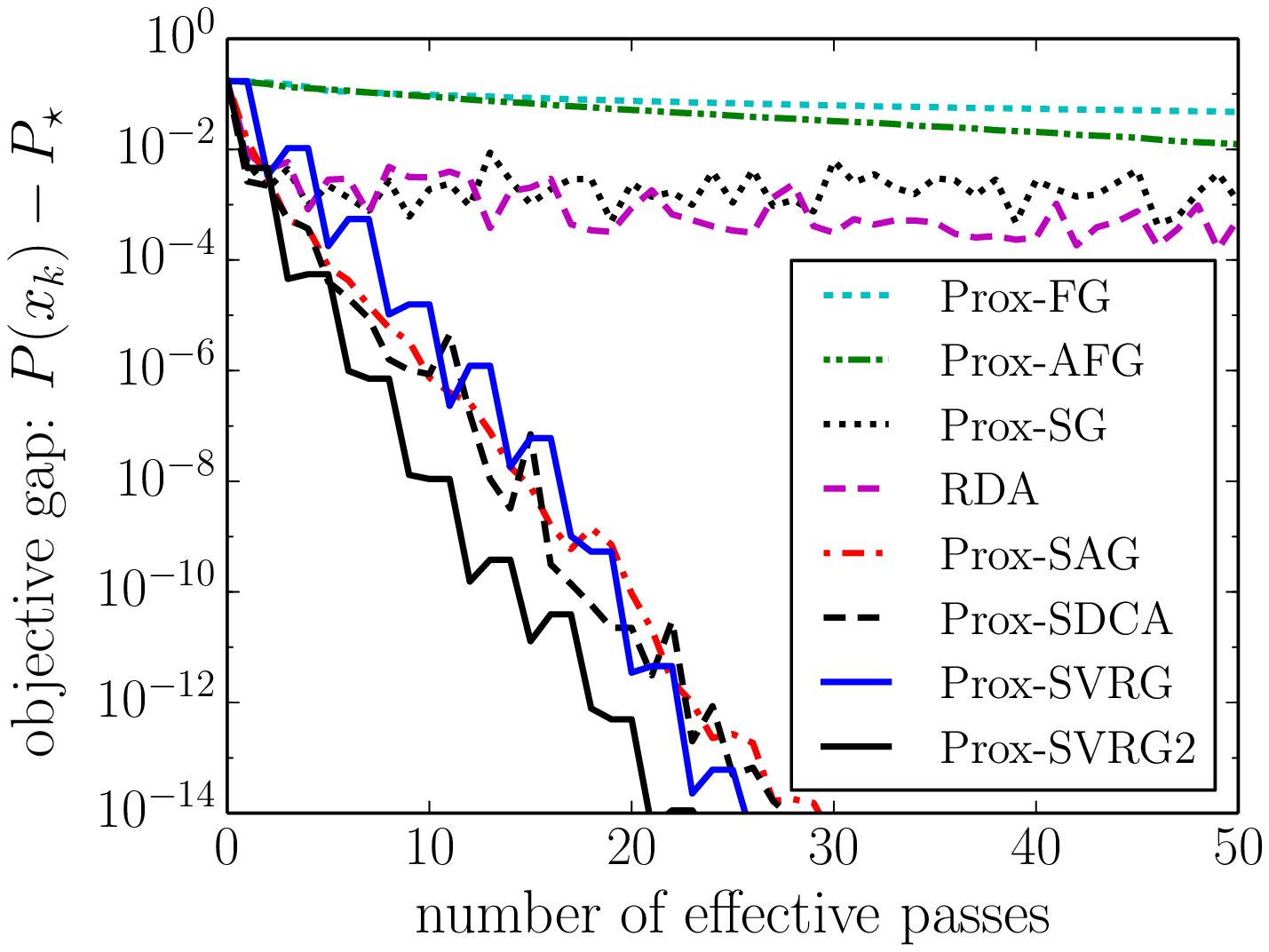}
    \includegraphics[width=0.49\textwidth]{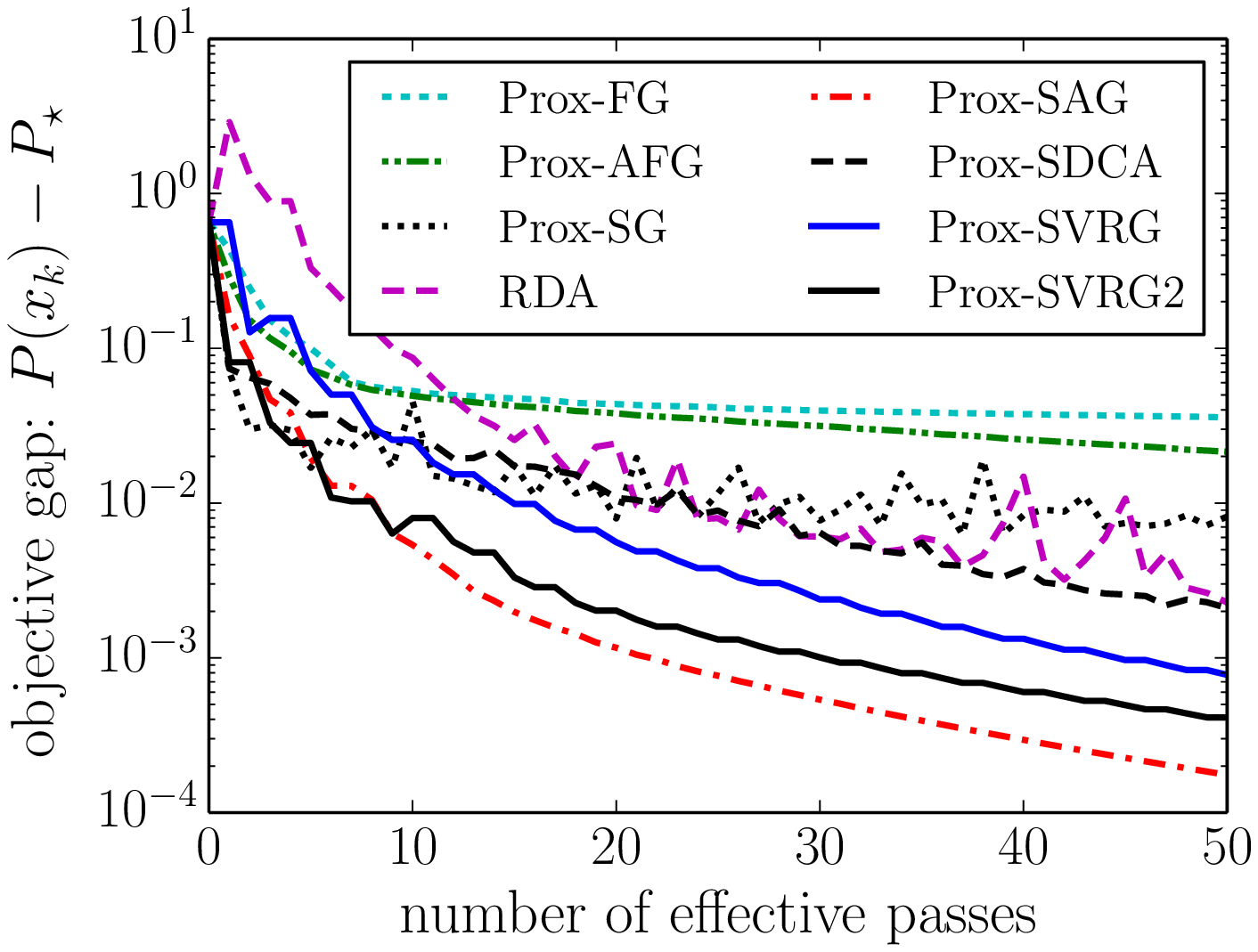}
    \caption{Comparison of different methods on \texttt{covertype} (left)
    and \texttt{sido0} (right).}
    \label{fig:cov-sido}
\end{figure}

\section{Conclusions}
We developed a new proximal stochastic gradient method, called Prox-SVRG,
for minimizing the sum of two convex functions: one is the average of a large 
number of smooth component functions, and the other is a general convex 
function that admits a simple proximal mapping. 
This method exploits the finite average structure of the smooth part by
extending the variance reduction technique of SVRG \cite{JohnsonZhang13}, 
which computes the full gradient periodically to modify the stochastic 
gradients in order to reduce their variance.

The Prox-SVRG method enjoys the same low complexity as that of  
SDCA \cite{SSZhang13SDCA, SSZhang12prox-SDCA} and 
SAG \cite{LeRouxSchmidtBach12,SchmidtLeRouxBach13},
but applies to a more general class of problems, 
and does not require the storage of
the most recent gradient for each component function.
In addition, our method incorporates a weighted sampling scheme, 
which achieves an improved complexity result for problems where 
the component functions vary substantially in smoothness.


\appendix

\section{Proof of Lemma~\ref{lem:composite-lb}}
\label{apd:composite-lb}

We can write the proximal update $x^+=\prox_{\eta R}(x-\eta v)$ 
more explicitly as
\[
    x^+ = \argmin_y \left\{\frac{1}{2}\|y - (x-\eta v)\|^2
          + \eta R(y) \right\}.
\]
The associated optimality condition states that
there is a $\xi\in\partial R(x^+)$ such that
\[
   x^+ - ( x - \eta v ) + \eta \xi = 0.
\]
Combining with the definition of $g=(x-x^+)/\eta$, we have $\xi = g - v$.

By strong convexity of $F$ and $R$, we have for any $x\in\dom(R)$ and
$y\in\reals^d$,
\begin{eqnarray*}
    P(y) &=& F(y) + R(y) \\
    &\geq& F(x) + \nabla F(x)^T (y-x) + \frac{\mu_F}{2} \|y-x\|^2
        + R(x^+) + \xi^T(y - x^+) + \frac{\mu_R}{2} \|y-x^+\|^2 .
\end{eqnarray*}
By smoothness of~$F$, we can further lower bound $F(x)$ by
\[
F(x)\geq F(x^+) - \nabla F(x)^T(x^+-x) - \frac{L}{2}\|x^+ -x\|^2 .
\]
Therefore,
\begin{eqnarray*}
P(y) &\geq& F(x^+) - \nabla F(x)^T(x^+-x) - \frac{L}{2}\|x^+-x\|^2 \\
&& + \nabla F(x)^T (y-x) + \frac{\mu_F}{2} \|y-x\|^2
       + R(x^+) + \xi^T(y - x^+) + \frac{\mu_R}{2} \|y-x^+\|^2 \\
&=& P(x^+) - \nabla F(x)^T(x^+-x) - \frac{L\eta^2}{2}\|g\|^2 \\
&& + \nabla F(x)^T (y-x) + \frac{\mu_F}{2} \|y-x\|^2
       + \xi^T(y - x^+) + \frac{\mu_R}{2} \|y-x^+\|^2  ,
\end{eqnarray*}
where in the last equality we used $P(x^+)=F(x^+)+R(x^+)$ and $x^+-x=-\eta g$.
Collecting all inner products on the right-hand side, we have
\begin{eqnarray*}
&&  -\nabla F(x)^T(x^+-x) + \nabla F(x)^T(y-x) + \xi^T(y-x^+) \\
&=& \nabla F(x)^T(y-x^+) + (g-v)^T(y-x^+)\\
&=& g^T(y-x^+) + (v-\nabla F(x))^T(x^+-y) \\
&=& g^T(y-x + x-x^+) + \Delta^T(x^+-y) \\
&=& g^T(y-x) + \eta\|g\|^2 + \Delta^T(x^+-y) ,
\end{eqnarray*}
where in the first equality we used $\xi=g-v$, 
in the third equality we used $\Delta=v-\nabla F(x)$, 
and in the last equality we used $x-x^+=\eta g$.
Putting everything together, we obtain
\[
  P(y) \geq P(x^+) + g^T(y-x) + \frac{\eta}{2}(2-L\eta)\|g\|^2 
  + \frac{\mu_F}{2}\|y-x\|^2 + \frac{\mu_R}{2}\|y-x^+\|^2
  + \Delta^T(x^+ - y).
\]
Finally using the assumption $0<\eta\leq 1/L$, we arrive at the desired result.

\section{Convergence analysis of the Prox-FG method}
\label{apd:prox-fg}

Here we prove the convergence rate in~\eqref{eqn:prox-fg-rate}
for the Prox-FG method~\eqref{eqn:prox-fg}.
First we define the full gradient mapping 
$G_k = (x_k-x_{k-1})/\eta$ and use it to obtain
\begin{eqnarray*}
    \|x_k-x_\star\|^2 &=& \|x_{k-1}-x_\star-\eta G_k\|^2 \\
    &=& \|x_{k-1}-x_\star\|^2 - 2\eta G_k^T (x_{k-1}-x_\star) 
      + \eta^2 \|G_k\|^2 . 
\end{eqnarray*}
Applying Lemma~\ref{lem:composite-lb} with $x=x_{k-1}$, $v=\nabla F(x_{k-1})$,
$x^+=x_k$, $g=G_k$ and $y=x_\star$, we have $\Delta=0$ and
\[
    -G_k^T(x_{k-1}-x_\star) + \frac{\eta}{2}\|G_k\|^2 
    \leq P(x_\star) - P(x_k) - \frac{\mu_F}{2}\|x_{k-1}-x_\star\|^2
    -\frac{\mu_R}{2}\|x_k-x_\star\|^2 . 
\]
Therefore,
\begin{eqnarray*}
\|x_k-x_\star\|^2 
\leq \|x_{k-1}-x_\star\|^2 + 2\eta \left(F(x_\star)-F(x_k) 
-\frac{\mu_F}{2} \|x_{k-1}-x_\star\|^2 
 - \frac{\mu_R}{2} \|x_k-x_\star\|^2\right) .
\end{eqnarray*}
Rearranging terms in the above inequality yields
\begin{equation}\label{eqn:fg-obj-dist}
2\eta \bigl(F(x_k)-F(x_\star)\bigr) + (1+\eta\mu_R) \|x_k-x_\star\|^2 
\leq (1-\eta\mu_F) \|x_{k-1}-x_\star\|^2 .
\end{equation}
Dropping the nonnegative term $2\eta \bigl(F(x_k)-F(x_\star)\bigr)$ on the
left-hand side results in
\[
\|x_k-x_\star\|^2 \leq \frac{1-\eta\mu_F}{1+\eta\mu_R} \|x_{k-1}-x_\star\|^2,
\]
which leads to
\[
\|x_k-x_\star\|^2 \leq \left(\frac{1-\eta\mu_F}{1+\eta\mu_R}\right)^k 
\|x_0-x_\star\|^2. 
\]
Dropping the nonnegative term $(1+\eta\mu_R) \|x_k-x_\star\|^2$ on the left-hand
side of~\eqref{eqn:fg-obj-dist} yields
\[
F(x_k)-F(x_\star) \leq \frac{1-\eta\mu_F}{2\eta} \|x_{k-1}-x_\star\|^2 
\leq \frac{1+\eta\mu_R}{2\eta} \left(\frac{1-\eta\mu_F}{1+\eta\mu_R}\right)^k 
\|x_0-x_\star\|^2. 
\]
Setting $\eta=1/L$, the above inequality is equivalent to~\eqref{eqn:prox-fg-rate}.

\bibliographystyle{alpha}
\bibliography{prox_svrg}

\end{document}